\newtheorem{thm}{Theorem}[section]
\newtheorem{prop}{Proposition}[section]
\newtheorem{lem}{Lemma}[section]
\newtheorem{cor}{Corollary}[section]
\newtheorem{defi}{Definition}[section]
\begin{document}

\title{On the topological stability and shadowing in zero-dimensional spaces}
\author{Noriaki Kawaguchi}
\subjclass[2010]{54H20; 37C50}
\keywords{topological stability; shadowing property; zero-dimensional space}
\address{Graduate School of Mathematical Sciences, The University of Tokyo, 3-8-1 Komaba Meguro, Tokyo 153-8914, Japan}
\email{knoriaki@ms.u-tokyo.ac.jp}

\maketitle

\markboth{NORIAKI KAWAGUCHI}{On the topological stability and shadowing in zero-dimensional spaces}

\begin{abstract}
In this paper, we examine the notion of topological stability and its relation to the shadowing properties in zero-dimensional spaces. Several counter-examples on the topological stability and the shadowing properties are given. Also, we prove that any topologically stable (in a modified sense) homeomorphism of a Cantor space exhibits only simple typical dynamics.
\end{abstract}

\section{Introduction}

Stability is one of the most important notions in the qualitative study of dynamical systems, and the relationship with it is a basic subject of the theory of shadowing properties. The topological stability introduced by Walters in \cite{W} is a kind of structural stability defined for all homeomorphisms of compact metric spaces. Among the papers dealing with the notion, there are \cite{H, N, Y} (see also \cite{P}), and some recent attempts were made to explore the variants of topological stability \cite{AM, LM, MMT}. Although there are a number of studies on the topological stability, the main attention seems to have been focused on the homeomorphisms (or diffeomorphisms) of topological (or differentiable) manifolds. In this paper, we examine the notion in zero-dimensional spaces, mainly in relation with the shadowing properties, and observe some singular behaviors possibly different from when the spaces are manifolds. Especially, we give several counter-examples on the topological stability and the shadowing properties. Moreover, we prove that all topologically stable (in a modified sense) homeomorphisms of Cantor spaces exhibit only simple typical dynamics.

Let us begin with the definition of the topological stability. Throughout this paper, $(X,d)$ denotes a compact metric space $X$ endowed with a metric $d$. We denote by $\mathcal{C}(X)$ the set of continuous self-maps of $X$ and by $\mathcal{H}(X)$ the set of homeomorphisms of $X$. We define metrics $d_{C^0}$ on  $\mathcal{C}(X)$ and $D$ on $\mathcal{H}(X)$ by \[d_{C^0}(f,g)=\sup_{x\in X}d(f(x),g(x))\] for $f,g\in\mathcal{C}(X)$, and \[D(f,g)=\max\{d_{C^0}(f,g),d_{C^0}(f^{-1},g^{-1})\}\] for $f,g\in\mathcal{H}(X)$. As $\mathcal{H}(X)\subset\mathcal{C}(X)$, $d_{C^0}$ gives another metric on $\mathcal{H}(X)$, but we know that $d_{C^0}$ and $D$ are equivalent metrics on $\mathcal{H}(X)$. We give a proof of this fact in Section 2 for the sake of completeness. Then, for $f\in\mathcal{H}(X)$, we say that $f$ is {\em topologically stable} if for any $\epsilon>0$, there is $\delta>0$ such that for every $g\in\mathcal{H}(X)$ with $D(f,g)<\delta$, there is $h\in\mathcal{C}(X)$ with $d_{C^0}(h,id_X)<\epsilon$ and $h\circ g=f\circ h$.

The most part of this paper is concerned with the relation between the topological stability and the shadowing properties, so we shall recall the definitions of shadowing properties dealt with in this paper. Given a map $f:X\to X$, a finite sequence $(x_i)_{i=0}^{k}$ of points in $X$, where $k$ is a positive integer, is called a {\em $\delta$-chain} of $f$ if $d(f(x_i),x_{i+1})\le\delta$ for every $0\le i\le k-1$. A $\delta$-chain $(x_i)_{i=0}^{k}$ of $f$ is said to be a {\em $\delta$-cycle} of $f$ if $x_0=x_k$. For $\delta>0$, a sequence $(x_i)_{i\ge0}$ of points in $X$ is called a {\em $\delta$-pseudo orbit} of $f$ if $d(f(x_i),x_{i+1})\le\delta$ for all $i\ge0$. Then, for $\epsilon>0$, a $\delta$-pseudo orbit  $(x_i)_{i\ge0}$ of $f$ is said to be {\em $\epsilon$-shadowed} by $x\in X$ if $d(x_i,f^i(x))\leq \epsilon$ for all $i\ge 0$. We say that $f$ has the {\em shadowing property} if for any $\epsilon>0$, there is $\delta>0$ such that every $\delta$-pseudo orbit of $f$ is $\epsilon$-shadowed by some point of $X$. The following definition is not so standard as the shadowing property. We say that $f$ has the {\em strict periodic shadowing property} if for any $\epsilon>0$, there is $\delta>0$ such that for any $\delta$-cycle $(x_i)_{i=0}^m$ of $f$, where $m$ is a positive integer, there is $p\in X$ such that $f^m(p)=p$ and $d(x_i,f^i(p))\le\epsilon$ for all $0\le i\le m$. Let $Per(f)$ denote the set of periodic points for $f$. If we weaken the condition `$f^m(p)=p$' to `$p\in Per(f)$', it become the definition of the {\em periodic shadowing property} (see, for example, \cite{PS}). A point $x\in X$ is said to be a {\em chain recurrent point} for $f$ if for any $\delta>0$, there is a $\delta$-cycle $(x_i)_{i=0}^{k}$ of $f$ with $x_0=x_k=x$. We denote by $CR(f)$ the set of chain recurrent points for $f$. Note that  we have $CR(f)=\overline{Per(f)}$ when $f$ has the (strict) periodic shadowing property.

There is a property of compact metric spaces derived from \cite[Lemma 4]{SS}, which relates the topological stability to the shadowing properties. In \cite{W}, by using the fact that the closed differentiable manifolds of dimension grater than $1$ have such a property \cite[Lemma 10]{W} (see also \cite[Lemma 13]{NS}), Walters proved that all topologically stable homeomorphisms of those spaces satisfy the shadowing property. Its definition is given as follows. For $n\ge1$, an $n$-tuple $(x_1,x_2,\dots,x_n)\in X^n$ is said to be proper if $x_1,x_2,\dots,x_n$ are pairwise distinct, that is, $x_i\neq x_j$ for all $1\le i<j\le n$. For $n\ge 1$, we define a metric $d_n$ on $X^n$ by \[d_n(\zeta,\eta)=\max_{1\le i\le n}d(x_i,y_i)\] for $\zeta=(x_1,x_2,\dots,x_n)$, $\eta=(y_1,y_2,\dots,y_n)\in X^n$. For any map $f:X\to X$ and $n\ge1$, the $n$-fold product $f^{(n)}:X^{n}\to X^n$ is defined by $f^{(n)}(\zeta)=(f(x_1),f(x_2),\dots,f(x_n))$ for $\zeta=(x_1,x_2,\dots,x_n)\in X^n$. Then, we say that a compact metric space $(X,d)$ has the {\em property*} when for any $\epsilon>0$, there is $\delta>0$ such that the following condition holds: Given any integer $n\ge1$ and any pair of proper $n$-tuples $\zeta,\eta\in X^n$, if $d_n(\zeta,\eta)<\delta$, then there is $\phi\in\mathcal{H}(X)$ such that $D(\phi,id_X)<\epsilon$ and $\phi^{(n)}(\zeta)=\eta$.

A compact metric space $(X,d)$ is said to be a {\em Cantor space} if it is perfect, that is, it has no isolated point, and its topological dimension (denoted by $\dim X$) is zero, or equivalently, it is totally disconnected. Every Cantor space is homeomorphic  to the Cantor ternary set in the unit interval. We let the following lemma be a base of the study in this paper.

\begin{lem}
Any Cantor space $(X,d)$ has the property*.
\end{lem}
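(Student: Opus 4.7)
The plan is to use the zero-dimensionality of $X$ to reduce the construction to local Cantor pieces, on each of which the multiple transitivity of the homeomorphism group on points can be applied, and then to glue.

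Given $\epsilon>0$, I would choose a finite clopen partition $X=U_1\sqcup\cdots\sqcup U_k$ with $\mathrm{diam}(U_j)<\epsilon$ for each $j$; such a partition exists because $X$ is compact and zero-dimensional. The $U_j$'s are pairwise disjoint and closed, hence $\delta:=\min_{j\neq j'}d(U_j,U_{j'})>0$. Now suppose $\zeta=(x_1,\dots,x_n)$ and $\eta=(y_1,\dots,y_n)$ are proper $n$-tuples with $d_n(\zeta,\eta)<\delta$. Then for each $i$ the points $x_i$ and $y_i$ lie in a common partition element, so setting $I_j=\{i:x_i\in U_j\}$ we also have $I_j=\{i:y_i\in U_j\}$.

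The key step is to construct, on each $U_j$ with $I_j\neq\emptyset$, a homeomorphism $\phi_j:U_j\to U_j$ satisfying $\phi_j(x_i)=y_i$ for every $i\in I_j$; then I define $\phi\in\mathcal{H}(X)$ by gluing, i.e.\ $\phi|_{U_j}=\phi_j$ (and $\phi_j=\mathrm{id}_{U_j}$ when $I_j=\emptyset$). Since the $U_j$ are clopen, $\phi$ is a homeomorphism; and because $\phi(U_j)=U_j$, each point $x\in X$ satisfies $d(\phi(x),x)<\mathrm{diam}(U_j)<\epsilon$ and likewise $d(\phi^{-1}(x),x)<\epsilon$, so $D(\phi,\mathrm{id}_X)<\epsilon$. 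The identity $\phi^{(n)}(\zeta)=\eta$ is immediate from the construction.

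To build $\phi_j$, I would observe that any nonempty clopen subset of the Cantor space $X$ is itself a Cantor space, so $U_j$ is a Cantor space. Inside $U_j$, I refine into a clopen partition $\{A_i\}_{i\in I_j}$ with $x_i\in A_i$ (obtained from pairwise disjoint clopen neighborhoods of the distinct $x_i$'s, with one piece absorbing the remainder), and similarly $\{B_i\}_{i\in I_j}$ with $y_i\in B_i$. Each $A_i$ and $B_i$ is then a Cantor space, and by the homogeneity of the Cantor space (for any two points of a Cantor space there is a self-homeomorphism swapping them) there exists a homeomorphism $\psi_i:A_i\to B_i$ with $\psi_i(x_i)=y_i$; gluing the $\psi_i$ over $i\in I_j$ gives the required $\phi_j$. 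The main obstacle is thus precisely this multiple-transitivity / homogeneity of $\mathrm{Homeo}(C)$ for a Cantor space $C$; it is classical, but the final write-up should include a self-contained derivation, for instance by a recursive refinement of clopen partitions matching the two tuples piece by piece.
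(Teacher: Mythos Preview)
Your argument is correct, but it follows a genuinely different route from the paper's. The paper fixes the concrete model $\{0,1\}^{\mathbb N}$ with the standard ultrametric and, assuming first that all $2n$ points $x_1,\dots,x_n,y_1,\dots,y_n$ are pairwise distinct, chooses a level $K$ at which the cylinder sets $[\alpha_i]\ni x_i$ and $[\beta_i]\ni y_i$ are all distinct, then defines $\phi$ as an explicit involution swapping $[\alpha_i]$ with $[\beta_i]$ via coordinatewise XOR by a fixed word $w_i$; the ultrametric guarantees the required $D(\phi,\mathrm{id})<\delta$. The general case (where some $x_i$ may equal some $y_j$) is then reduced to the special case by passing through an intermediate proper tuple $\theta$ close to $\zeta$ and composing two such homeomorphisms.

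Your approach is more abstract and metric-independent: you use an arbitrary $\epsilon$-fine clopen partition to localize, and on each piece $U_j$ you build $\phi_j$ by choosing two clopen partitions $\{A_i\}$ and $\{B_i\}$ indexed by $I_j$ and gluing homeomorphisms $\psi_i:A_i\to B_i$ with $\psi_i(x_i)=y_i$. This avoids the intermediate-tuple trick entirely, since you only need the $x_i$'s mutually distinct and the $y_i$'s mutually distinct, not all $2n$ points distinct. The price is that you invoke two classical black boxes---that any nonempty clopen subset of a Cantor space is again a Cantor space (hence $A_i\cong B_i$) and that the homeomorphism group of a Cantor space acts transitively on points---whereas the paper's construction is explicit and self-contained. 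One small imprecision to fix in your write-up: the phrase ``homogeneity'' alone does not give $\psi_i:A_i\to B_i$ when $A_i\neq B_i$; you need first any homeomorphism $A_i\to B_i$ (from the topological uniqueness of the Cantor set) and then compose with a self-homeomorphism of $B_i$ moving the image of $x_i$ to $y_i$.
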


Under the assumption that the space is perfect and has the property*, we prove that a topologically stable homeomorphism satisfies not only the shadowing property but also the strict periodic shadowing property.

\begin{thm}
Let $(X,d)$ be a compact metric space which is perfect and has the property*. For any $f\in\mathcal{H}(X)$, if $f$ is topologically stable, then $f$ has the shadowing property and the strict periodic shadowing property, especially $f$ satisfies $CR(f)=\overline{Per(f)}$.
\end{thm}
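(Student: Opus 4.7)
The plan is to adapt Walters' original strategy to the present hypotheses: fabricate a homeomorphism $g$ close to $f$ in the $D$-metric whose honest orbit (or periodic orbit) realizes a given pseudo orbit (or cycle), then use the topological semiconjugacy $h\circ g=f\circ h$ delivered by topological stability to push that orbit down to a shadowing orbit of $f$. Concretely, fix $\epsilon>0$; topological stability gives $\delta_0>0$ for the input $\epsilon/2$. By uniform continuity of $f^{-1}$, choose $\alpha>0$ so that $D(\phi,id_X)<\alpha$ forces $D(\phi\circ f,f)<\delta_0$; feeding this $\alpha$ into property* yields $\delta_*>0$; and by uniform continuity of $f$, select a perturbation scale $\tau\in(0,\epsilon/2)$ and a pseudo-orbit scale $\delta>0$ small enough that $\delta$-data, after a $\tau$-perturbation supplied by perfectness, give rise to proper tuples whose $f$-images and one-step shifts lie within $\delta_*$ of each other in the metric $d_n$.

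For the strict periodic shadowing property, let $(x_i)_{i=0}^m$ be a $\delta$-cycle. Perfectness furnishes pairwise distinct $y_0,\dots,y_{m-1}\in X$ with $d(x_i,y_i)<\tau$; set $y_m:=y_0$. Then the $m$-tuples $\zeta:=(f(y_0),\dots,f(y_{m-1}))$ and $\eta:=(y_1,\dots,y_{m-1},y_0)$ are proper, and by the choice of the constants $d_m(\zeta,\eta)<\delta_*$. Property* yields $\phi\in\mathcal{H}(X)$ with $D(\phi,id_X)<\alpha$ and $\phi^{(m)}(\zeta)=\eta$. Set $g:=\phi\circ f\in\mathcal{H}(X)$; then $g(y_i)=y_{i+1\bmod m}$, so $g^m(y_0)=y_0$, and $D(g,f)<\delta_0$. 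Topological stability produces $h\in\mathcal{C}(X)$ with $d_{C^0}(h,id_X)<\epsilon/2$ and $h\circ g=f\circ h$, and then $p:=h(y_0)$ satisfies $f^m(p)=h(g^m(y_0))=p$ and $f^i(p)=h(y_{i\bmod m})$, which $\epsilon$-shadows $(x_i)_{i=0}^m$ via the triangle inequality $d(x_i,f^i(p))\le\tau+\epsilon/2<\epsilon$.

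For the shadowing property, apply the same recipe to each finite initial segment $(x_i)_{i=0}^{N}$ of a $\delta$-pseudo orbit (without any cyclic identification): perturb by perfectness to pairwise distinct $(y_i^{(N)})_{i=0}^N$ within $\tau$ of $(x_i)_{i=0}^N$, build the proper $N$-tuples $\zeta=(f(y_0^{(N)}),\dots,f(y_{N-1}^{(N)}))$ and $\eta=(y_1^{(N)},\dots,y_N^{(N)})$, then construct $\phi_N,g_N,h_N$ exactly as above, and set $p_N:=h_N(y_0^{(N)})$, which $\epsilon$-shadows $(x_i)_{i=0}^N$. A convergent subsequence $p_{N_k}\to p$ together with continuity of each $f^i$ yields $p\in X$ with $d(x_i,f^i(p))\le\epsilon$ for every $i\ge0$. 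The identity $CR(f)=\overline{Per(f)}$ is then the consequence of strict periodic shadowing already recorded in the introduction. The main obstacle is the bookkeeping for the perfectness step: one must perturb possibly non-proper pseudo-data into pairwise distinct tuples while simultaneously controlling how far their $f$-images have moved, so that the resulting proper tuples $\zeta,\eta$ still satisfy $d_n(\zeta,\eta)<\delta_*$. Once the quantifiers are threaded in the order $\epsilon\to\delta_0\to\alpha\to\delta_*\to(\delta,\tau)$, the remaining verifications are routine triangle-inequality estimates.
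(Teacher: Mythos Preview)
Your proof is correct and follows essentially the same approach as the paper's: perturb the cycle by perfectness to obtain proper tuples, apply property* to manufacture $\phi$ and hence $g=\phi\circ f$ whose genuine orbit realizes the perturbed cycle, then semiconjugate via topological stability and read off the periodic shadowing point $p=h(y_0)$. The paper keeps $z_0=z_m=x_0$ fixed while perturbing only the interior points, whereas you perturb all of $y_0,\dots,y_{m-1}$ and set $y_m=y_0$; this is an inessential variation, and your observation that injectivity of $f$ automatically makes $(f(y_0),\dots,f(y_{m-1}))$ proper once the $y_i$ are distinct is the same point the paper uses. For the ordinary shadowing property the paper simply invokes Walters' argument without details, while you spell out the finite-segment-plus-compactness limiting argument; this is the standard route and is fine.
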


When the property* is absent, the implication as in Theorem 1.1 does not hold in general.  For example in \cite{C}, Cook gave an example of a non-degenerate continuum $C$ with $\mathcal{H}(C)=\{id_C\}$. For such $C$, $id_C$ is trivially topologically stable, but we easily see that it has neither the shadowing property nor the strict periodic shadowing property.

It is known that there exists a circle homeomorphism which satisfies the shadowing property but is not topologically stable \cite{Y}. The {\em continuous shadowing property} introduced by Lee in \cite{L} is a stronger property than the shadowing property (its precise definition is given in Section 2). The following statement is a consequence of \cite[Theorem 2.5]{L}: If $f\in\mathcal{H}(M)$ has the continuous shadowing property, then $f$ is topologically stable. There seems to be an implicit assumption in \cite[Theorem 2.5]{L} that the space $M$ is a closed differentiable manifold. In contrast, by using Theorem 1.1, we prove the following corollary.

\begin{cor}
Let $(X,d)$ be a Cantor space. Then, there exists $f\in\mathcal{H}(X)$ which has the continuous shadowing property but is not topologically stable.
\end{cor}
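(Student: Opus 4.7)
The plan is to take $f$ to be a dyadic odometer on $X$ and use Theorem~1.1 as the obstruction to topological stability. Since $X$ is perfect and, by Lemma~1.1, has the property*, Theorem~1.1 says that any topologically stable $g \in \mathcal{H}(X)$ must satisfy $CR(g) = \overline{Per(g)}$. Thus it will suffice to produce $f \in \mathcal{H}(X)$ with continuous shadowing but $CR(f) \neq \overline{Per(f)}$.

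First I would fix a homeomorphism $\pi\colon X \to \{0,1\}^{\mathbb{N}}$ and set $f = \pi^{-1} \circ T \circ \pi$, where $T$ is the ``$+1$ with carry'' odometer. Being conjugate to a minimal rotation on the compact abelian group of $2$-adic integers, $f$ is minimal and has no periodic points, so $CR(f) = X \neq \emptyset = \overline{Per(f)}$. By Theorem~1.1, $f$ is not topologically stable.

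The core of the argument is then to verify that $f$ has the continuous shadowing property. Since $\pi$ and $\pi^{-1}$ are uniformly continuous, continuous shadowing transfers through the conjugation, so it is enough to check it for $T$ on $\{0,1\}^{\mathbb{N}}$ with the standard ultrametric $\rho(x,y) = 2^{-\min\{i\,:\,x_i \neq y_i\}}$. The key structural fact is that, for each $n$, the first $n$ coordinates of $T(x)$ depend only on the first $n$ coordinates of $x$, so $T$ descends to a cyclic permutation $\bar T$ of $\{0,1\}^n$. Given $\epsilon > 0$, I would choose $n$ with $2^{-n} < \epsilon$ and take $\delta = 2^{-n}$. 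For any $\delta$-pseudo-orbit $(x_i)$ of $T$, the relation $\rho(T(x_i), x_{i+1}) \leq \delta$ forces the first-$n$-coordinate sequence $a_i$ of $x_i$ to satisfy $a_{i+1} = \bar T(a_i)$, so $a_i = \bar T^i(a_0)$ for every $i \geq 0$. Defining the shadowing point to be the canonical lift of $a_0$ padded by zeros yields a point $y$ with $\rho(T^i(y), x_i) \leq 2^{-n} < \epsilon$ for all $i \geq 0$. This rule depends only on the first $n$ coordinates of $x_0$, making it a locally constant, hence continuous, map from the space of $\delta$-pseudo-orbits into $X$; this is precisely the continuous shadowing property.

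I expect the principal obstacle to be matching the explicit, locally constant shadowing rule above to Lee's precise definition of continuous shadowing (deferred to Section~2 of the paper). A secondary technical point is that the metric $d$ on $X$ transported from $\rho$ via $\pi$ need not be an ultrametric; but compactness renders the adjustment of the $\delta$'s via uniform continuity of $\pi^{\pm 1}$ entirely routine, without altering the qualitative argument.
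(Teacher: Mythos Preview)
Your proposal is correct and follows the same overall strategy as the paper: take $f$ to be an odometer, use $Per(f)=\emptyset$ together with Theorem~1.1 (and Lemma~1.1) to rule out topological stability, and then verify the continuous shadowing property directly. The one noteworthy difference lies in that last step. The paper packages the verification into a general lemma (Lemma~3.2): for \emph{any} equicontinuous homeomorphism of a zero-dimensional space, the map $r(x)=x_0$ works, since once some point $p$ $\gamma$-shadows $(x_i)_{i\in\mathbb{Z}}$ by ordinary shadowing, equicontinuity forces $x_0$ itself to $\epsilon$-shadow the whole pseudo-orbit. Your locally constant ``pad $a_0$ by zeros'' rule is also valid, but the paper's choice $r(x)=x_0$ is simpler and uses no special structure of the odometer beyond equicontinuity. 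As for your anticipated obstacle: the paper's definition in Section~2.2 does use bi-infinite pseudo-orbits $(x_i)_{i\in\mathbb{Z}}$, but your argument extends to negative $i$ without change because $\bar T$ is a bijection on $\{0,1\}^n$, so $a_i=\bar T^i(a_0)$ for all $i\in\mathbb{Z}$.
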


Indeed, Corollary 1.1 is an immediate consequence of the fact that the odometers satisfy the continuous shadowing property, but by Theorem 1.1, they are not topologically stable. This corollary clarifies that the continuous shadowing property does not necessarily imply the topological stability unless there are proper assumptions on the space.

The next result concerns the notion of equicontinuity. For $f\in\mathcal{H}(X)$, we say that $f$ is {\em equicontinuous} if for any $\epsilon>0$, there is $\delta>0$ such that $d(x,y)\le\delta$ implies $\sup_{i\in\mathbb{Z}}d(f^i(x),f^i(y))\le\epsilon$ for all $x,y\in X$.

\begin{thm}
Let $(X,d)$ be a compact metric space and let $f\in\mathcal{H}(X)$ be an equicontinuous homeomorphism. If $f$ has the strict periodic shadowing property, then $f$ is topologically stable, and $\dim X=0$.
\end{thm}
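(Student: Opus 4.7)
The plan exploits equicontinuity to obtain an $f$-invariant metric. Since $f$ is an equicontinuous homeomorphism of the compact space $X$, there is a compatible $f$-invariant metric $d'$ (equivalent to $d$), and under $d'$ the map $f$ is an isometry; I identify $d$ with $d'$ throughout. An isometry on a compact metric space has every point chain-recurrent, so $CR(f) = X$; combined with strict periodic shadowing this forces $\overline{Per(f)} = X$.

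For $\dim X = 0$, the plan is to show that every $x \in X$ is periodic, so that $X = Per(f)$ is a countable union of finite orbits, hence a countable compact metric space, which is automatically zero-dimensional. Suppose to the contrary that some $x$ has infinite orbit closure $M$. For equicontinuous $f$, the set $M$ is a minimal subsystem, and $f|_M$ is conjugate to a translation by an infinite-order element on a compact abelian group; in particular $f|_M$ is minimal with no finite nonempty invariant subsets. Fix $\epsilon > 0$ small, let $\delta$ come from strict periodic shadowing for $\epsilon$, and use almost periodicity of $x$ to find $n$ with $d(x, f^n(x)) < \delta$; then $(x, f(x), \ldots, f^{n-1}(x), x)$ is a $\delta$-cycle. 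Strict periodic shadowing produces a periodic $p$ with $f^n(p) = p$ and $d(f^i(x), f^i(p)) \leq \epsilon$ for all $0 \le i \le n$; since $f$ is an isometry, these inequalities all reduce to $d(x, p) \le \epsilon$, and so $O(p)$ is a finite $f$-invariant set contained in the $\epsilon$-neighborhood of $M$. Taking $\epsilon$ small enough that the nearest-point projection $\pi$ onto $M$ is well-defined on this neighborhood, the isometry property of $f$ together with $f(M) = M$ makes $\pi$ equivariant: $\pi \circ f = f \circ \pi$. Then $\pi(O(p))$ is a nonempty finite $f|_M$-invariant subset of $M$, contradicting minimality.

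For topological stability, given $\epsilon > 0$, let $\delta_0$ be the value provided by strict periodic shadowing for error $\epsilon/3$, and set $\delta = \delta_0 / 3$. Fix $g \in \mathcal{H}(X)$ with $D(f, g) < \delta$. For each $x \in X$, the bi-infinite $g$-orbit $(g^i(x))_{i \in \mathbb{Z}}$ is a $\delta$-pseudo-orbit of $f$ in both directions and has accumulation points on each side, so one can choose $m_k \to -\infty$ and $n_k \to +\infty$ with $d(g^{m_k}(x), g^{n_k}(x)) < \delta$. Each window $[m_k, n_k]$ yields a $2\delta$-cycle of $f$, and strict periodic shadowing gives periodic points $p_k$ of $f$ with $d(g^{m_k + i}(x), f^i(p_k)) \le \epsilon/3$ for $0 \le i \le n_k - m_k$. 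Setting $h_k(x) := f^{-m_k}(p_k)$ yields $d(g^n(x), f^n(h_k(x))) \le \epsilon/3$ on the window $[m_k, n_k]$; extracting a convergent subsequence defines $h(x)$ with $d(g^n(x), f^n(h(x))) \le \epsilon/3$ for every $n \in \mathbb{Z}$. Specializing to $n = 0$ gives $d(x, h(x)) \le \epsilon/3$, and $h \circ g = f \circ h$ follows by index-shifting the defining estimate. Continuity of $h$ follows from the uniformity of the estimates and the zero-dimensionality of $X$ from Step 2.

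The main obstacle is Step 2: rigorously establishing that the nearest-point projection $\pi$ from a neighborhood of $M$ onto $M$ is well-defined and equivariantly continuous (for a general compact abelian group $M$ embedded in $X$, uniqueness of closest points is not automatic). One likely route is to show that equicontinuous minimal sets in $(X, d')$ have positive ``injectivity radius'' and choose $\epsilon$ below it; failing that, a multi-valued but still equivariant selection suffices. Step 3 further requires coherent continuous selection of $h(x)$ across $x \in X$, which is where the discrete structure provided by $\dim X = 0$ is essential.
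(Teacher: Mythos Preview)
Your Step~2 contains a fatal error that you do not flag: the implication ``$X=Per(f)$ implies $X$ is countable'' is false. The identity map on the Cantor set satisfies $X=Per(f)$ with $X$ uncountable. A space that is a union of finite orbits need not be a \emph{countable} union; there may be uncountably many orbits. So even if the nearest-point projection argument could be made rigorous (and, as you yourself note, uniqueness of the projection is not automatic), it would at best yield $X=Per(f)$, from which $\dim X=0$ simply does not follow. The paper's route to $\dim X=0$ is completely different and much shorter: strict periodic shadowing trivially implies the pseudo periodic shadowing property, which by Lemma~4.1 is equivalent to the shadowing property on $CR(f)$; equicontinuity gives $CR(f)=X$, so $f$ itself has the shadowing property, and then Lemma~2.2(1) (an equicontinuous homeomorphism has shadowing iff $\dim X=0$) finishes the job.

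Your Step~3 also has genuine gaps beyond what you acknowledge. The limit defining $h(x)$ need not be unique, and ``uniformity of the estimates and zero-dimensionality'' is not an argument for continuity; nothing prevents two nearby points from selecting different accumulation points. The paper sidesteps all of this by building $h$ explicitly from a finite clopen structure. Once $\dim X=0$ is in hand, Lemma~4.2 provides, for every $\delta>0$, a finite clopen $\delta$-cyclic decomposition $\{D_{i,j}\}$ of $X$ with arbitrarily small mesh, satisfying $f(D_{i,j})\subset D_{i,j+1}$. Choosing a point in each $D_{i,j}$ gives a $\delta$-cycle for each $i$; strict periodic shadowing then supplies periodic points $p_i$ with $f^{m_i}(p_i)=p_i$, and one sets $h(x)=f^j(p_i)$ for $x\in D_{i,j}$. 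This $h$ is locally constant on a clopen partition, hence trivially continuous, and any $g$ sufficiently close to $f$ also satisfies $g(D_{i,j})\subset D_{i,j+1}$, from which $h\circ g=f\circ h$ is immediate. Note that this $h$ has finite image; the definition of topological stability does not require $h$ to be surjective, and the paper explicitly remarks on this degeneracy.
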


This theorem gives a converse of Theorem 1.1 for equicontinuous homeomorphisms. As a direct consequence of Lemma 1.1, Theorem 1.1, and Theorem 1.2, we obtain the following corollary.

\begin{cor}
Let $(X,d)$ be a Cantor space and let $f\in\mathcal{H}(X)$ be an equicontinuous homeomorphism. Then, $f$ is topologically stable if and only if $f$ has the strict periodic shadowing property.
\end{cor}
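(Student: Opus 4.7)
The plan is to observe that Corollary 1.2 follows immediately by chaining Lemma 1.1, Theorem 1.1, and Theorem 1.2, with the role of the Cantor hypothesis being precisely to give us both perfectness and property*.

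For the forward implication, I would argue as follows. Assume $f\in\mathcal{H}(X)$ is topologically stable. Since $(X,d)$ is a Cantor space, it is perfect by definition, and by Lemma 1.1 it has property*. Hence the hypotheses of Theorem 1.1 are satisfied, and we conclude that $f$ has both the shadowing property and the strict periodic shadowing property. In particular, $f$ has the strict periodic shadowing property, which is all we need. Note that for this direction the assumption of equicontinuity is not actually used.

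For the reverse implication, I would simply invoke Theorem 1.2 directly. Assume $f$ has the strict periodic shadowing property. Since $f$ is equicontinuous by hypothesis, Theorem 1.2 gives that $f$ is topologically stable (and also that $\dim X=0$, which is automatic here since $X$ is a Cantor space, so that conclusion is redundant).

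There is essentially no obstacle: once the three prior results are available, the corollary is a matter of verifying that the hypotheses line up. The only thing worth double-checking is that a Cantor space indeed meets all the standing assumptions of Theorem 1.1 (perfect plus property*), both of which are immediate from the definition of Cantor space together with Lemma 1.1.
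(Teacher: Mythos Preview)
Your proposal is correct and matches the paper's approach exactly: the paper states Corollary 1.2 as ``a direct consequence of Lemma 1.1, Theorem 1.1, and Theorem 1.2'' without further argument, and your write-up simply unpacks this chain of implications. Your additional remarks (that equicontinuity is not needed for the forward direction, and that $\dim X=0$ is redundant in the reverse direction) are accurate observations that go slightly beyond what the paper bothers to say.
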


As proved in Section 5 (see Lemma 5.1), for any equicontinuous homeomorphism $f\in\mathcal{H}(X)$, if $\dim X=0$ and $X=\overline{Per(f)}$, then $f$ has the periodic shadowing property. Then, it is natural to expect that the same conditions still imply the {\em strict} periodic shadowing property. However, this is not the case. In Section 5, by modifying an odometer, we give an example of a homeomorphism $f$ of a Cantor space $(X,d)$ with the following properties.
\begin{itemize}
\item[(1)] $X=\overline{Per(f)}$.
\item[(2)] $f$ is equicontinuous.
\item[(3)] $f$ has the periodic shadowing property.
\item[(4)] $f$ does not have the strict periodic shadowing property.
\item[(5)] $f^3$ has the strict periodic shadowing property.
\end{itemize}
As a consequence, this example shows that the periodic shadowing property is not equivalent to the strict periodic shadowing property in general. It also shows that even if $f^n$ has the strict periodic shadowing property for some $n>0$, $f$ does not necessarily have the same property. Moreover, by the properties (2), (4), (5), and Corollary 1.2, we obtain the following corollary. 

\begin{cor}
Let $(X,d)$ be a Cantor space. Then, there exists $f\in\mathcal{H}(X)$ such that $f^3$ is topologically stable, but $f$ is not topologically stable.
\end{cor}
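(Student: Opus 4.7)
The plan is to invoke the homeomorphism $f$ of a Cantor space $(X,d)$ whose existence is announced just before Corollary 1.3 and will be constructed in Section 5: a modified odometer satisfying properties (1)--(5). Once such an $f$ is in hand, Corollary 1.3 is simply a double application of Corollary 1.2; all the genuine content has been pushed into the construction of the example. Accordingly, for the present statement I only need to record how properties (2), (4), (5) force the required conclusion.

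More precisely, I would first note that $f^3$ lies in $\mathcal{H}(X)$ since $f$ does. Next I would observe that equicontinuity of $f$ (property (2)) immediately transfers to every iterate $f^n$, because for every $i\in\mathbb{Z}$ we have $(f^n)^i=f^{ni}$, so the family $\{(f^n)^i\}_{i\in\mathbb{Z}}$ is a subfamily of the equicontinuous family $\{f^j\}_{j\in\mathbb{Z}}$. In particular $f^3$ is equicontinuous.

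With this in place, Corollary 1.2 applies to both $f$ and $f^3$ on the Cantor space $X$. Applied to $f^3$, property (5) (strict periodic shadowing) together with equicontinuity yields that $f^3$ is topologically stable. Applied to $f$, property (4) (failure of strict periodic shadowing) together with equicontinuity yields that $f$ is not topologically stable. This completes the proof of Corollary 1.3.

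The only real obstacle is the construction and verification of the example with properties (1)--(5), which is the subject of Section 5 and is not part of the present argument; once that example is available, Corollary 1.3 itself is essentially a bookkeeping step.
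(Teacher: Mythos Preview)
Your proposal is correct and matches the paper's own argument exactly: the paper states that Corollary~1.3 follows ``by the properties (2), (4), (5), and Corollary~1.2,'' and you have simply spelled this out, including the (necessary but trivial) observation that equicontinuity of $f$ passes to $f^3$. The substantive work is indeed entirely in the Section~5 construction, as you note.
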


It seems to be a natural attempt to seek a kind of shadowing property which is equivalent to the topological stability, and such a shadowing property may be expected to have the property that $f$ has the property iff  $f^n$ has the property for some $n>0$, as the standard shadowing property does. However, Corollary 1.3 shows that such an attempt should fail in the Cantor spaces. We remark that when $X=S^1$, it is known that $f\in\mathcal{H}(S^1)$ is topologically stable iff $f$ is topologically conjugate to a Morse-Smale diffeomorphism \cite{Y}, so it holds that $f$ is topologically stable iff $f^n$ is so for some $n>0$ iff $f^n$ is so for all $n>0$.

As a complement to Theorem 1.2, we prove the following statement.

\begin{prop}
Let $(X,d)$ be a compact metric space and let $f\in\mathcal{H}(X)$. If $\dim X=0$ and $X=Per(f)$, then $f$ is equicontinuous and satisfies the strict periodic shadowing property. 
\end{prop}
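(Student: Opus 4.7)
The plan is to prove equicontinuity first and then derive the strict periodic shadowing property from it. For equicontinuity, I would construct for every $\epsilon>0$ a finite clopen partition $\mathcal{V}$ of $X$ of mesh less than $\epsilon$ such that $f$ permutes its cells. Once $\mathcal{V}$ is available, a Lebesgue number $\delta$ of $\mathcal{V}$ witnesses equicontinuity: $d(x,y)<\delta$ forces $x,y$ into a common cell $V\in\mathcal{V}$, and hence $f^n(x), f^n(y)\in f^n(V)\in\mathcal{V}$, a cell of diameter less than $\epsilon$, for every $n\in\mathbb{Z}$.

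To construct $\mathcal{V}$, I would inductively choose points $x_1,x_2,\ldots\in X$ (finitely many, by compactness) with each $x_j$ lying in the $f$-invariant clopen set $X_j:=X\setminus\bigcup_{l<j}T(x_l)$, where $T(x_l)=\bigsqcup_{i=0}^{p_l-1}f^i(V(x_l))$ and $p_l=p(x_l)$, together with a clopen neighborhood $V(x_j)\ni x_j$ satisfying (i) $V(x_j)\subseteq X_j$; (ii) the iterates $V(x_j),f(V(x_j)),\ldots,f^{p_j-1}(V(x_j))$ are pairwise disjoint and each of diameter less than $\epsilon$; and (iii) $f^{p_j}(V(x_j))=V(x_j)$. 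Properties (i) and (ii) are standard: (i) uses that the complement of an $f$-invariant clopen set is $f$-invariant clopen, and (ii) is achieved by intersecting inside $X_j$ the pullbacks of pairwise disjoint small clopen neighborhoods of $f^0(x_j),\ldots,f^{p_j-1}(x_j)$. For (iii), the hypothesis $X=Per(f)$ enters decisively: if $V\ni x_j$ is clopen with $V$ contained in the candidate $W_0$ from (i) and (ii), and $f^{p_j}(V)\subseteq V$, then $f^{p_j}(V)=V$, because every $y\in V$ has a finite $f^{p_j}$-orbit so forward invariance forces backward invariance. The problem thus reduces to producing a clopen $f^{p_j}$-forward-invariant neighborhood of $x_j$ inside $W_0$, which should follow from pointwise periodicity of $f^{p_j}$ at the fixed point $x_j$, combined with $\dim X=0$ and compactness. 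The cells $\{f^i(V(x_j)):0\le i<p_j,\ j=1,\ldots,J\}$ then form $\mathcal{V}$, and $f$ permutes them cyclically within each tube $T(x_j)$.

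For strict periodic shadowing, given $\epsilon>0$ take $\mathcal{V}$ as above and let $\delta$ be a Lebesgue number of $\mathcal{V}$. For a $\delta$-cycle $(x_i)_{i=0}^m$ of $f$, the inequality $d(f(x_{i-1}),x_i)\le\delta$ forces $x_i$ and $f(x_{i-1})$ into the same cell, so by induction $x_i\in f^i(V_0)$ where $V_0\in\mathcal{V}$ is the cell of $x_0$; the closure $x_0=x_m$ gives $V_0=f^m(V_0)$. Writing $V_0=f^{i_0}(V(x_{j_0}))$, the disjointness in (ii) makes the least $q>0$ with $f^q(V_0)=V_0$ equal to $p_{j_0}$, so $p_{j_0}\mid m$. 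The point $p:=f^{i_0}(x_{j_0})\in V_0$ has $f$-period $p_{j_0}$, hence $f^m(p)=p$; and $f^i(p)\in f^i(V_0)$ shares a cell with $x_i$, a cell of diameter less than $\epsilon$, so $d(x_i,f^i(p))<\epsilon$ for every $i$.

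The principal obstacle is condition (iii) at non-isolated $x_j$: producing a clopen $f^{p_j}$-forward-invariant neighborhood of the fixed point $x_j$ of $f^{p_j}$ inside an arbitrary clopen $W_0$. The reduction of full invariance to forward invariance via pointwise periodicity is the key observation, but constructing the forward-invariant clopen $V$ itself requires a careful combination of $\dim X=0$, compactness of $X$, and the finiteness of all $f^{p_j}$-orbits.
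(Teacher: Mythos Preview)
Your scheme hinges on condition (iii), and you rightly flag it as the principal obstacle---but you do not resolve it, and this is a genuine gap. The assertion that a fixed point $x_j$ of the pointwise periodic homeomorphism $g=f^{p_j}$ admits arbitrarily small clopen $g$-forward-invariant neighborhoods is not a routine consequence of compactness, zero-dimensionality, and finiteness of orbits. The natural candidate $V_\infty=\bigcap_{n\ge 0}g^{-n}(W_0)$ is closed and $g$-invariant but need not be open: openness fails exactly when the periods of nearby points are unbounded in every neighborhood of $x_j$, and ruling this out is essentially the content of the implication ``distal and $\dim=0$ imply equicontinuous'' from \cite[Corollary~1.9]{AuGW}. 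So while you hope to deduce equicontinuity from the existence of $\mathcal{V}$, establishing (iii)---the only nontrivial step in building $\mathcal{V}$---already requires a result of the same depth as equicontinuity itself.

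The paper proceeds in the opposite order: it first observes that $X=Per(f)$ makes $f$ distal (immediate, since any two distinct points have finite orbits), then cites \cite{AuGW} to obtain equicontinuity outright, and only afterwards invokes partition machinery (Lemma~4.2 and the $\delta$-cyclic decomposition) for strict periodic shadowing. Once equicontinuity is available, your tower construction can indeed be completed---for instance, the compact group $\overline{\{g^n:n\in\mathbb{Z}\}}\subset\mathcal{C}(X)$ carries a small clopen neighborhood of $x_j$ to a clopen $g$-invariant one inside $W_0$---and your derivation of strict periodic shadowing from the $f$-permuted partition $\mathcal{V}$ is then correct and arguably cleaner than the paper's chain-component bookkeeping. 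But the equicontinuity step cannot be extracted from the tower construction itself; you should import it via distality and \cite{AuGW}, as the paper does, and then run the rest of your argument.
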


Note that in Proposition 1.1, not only dense but {\em every} point of $X$ is assumed to be a periodic point for $f$. In fact, this proposition implies that if a homeomorphism $f\in\mathcal{H}(X)$ of a Cantor space $(X,d)$ has the properties (2) and (4) above, then $X$ must contain a compact $f$-invariant subset $S\subset X$ such that  $f|_S$ is topologically conjugate to an odometer (see Lemma 2.2 in Section 2). By Proposition 1.1 and Theorem 1.2, we obtain the following corollary.

\begin{cor}
Let $(X,d)$ be a compact metric space. If $\dim X=0$, then the identity map $id_X:X\to X$ is topologically stable.
\end{cor}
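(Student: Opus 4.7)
The plan is to derive this as an immediate consequence of Proposition 1.1 and Theorem 1.2. For the identity map $f=id_X$, every point $x\in X$ satisfies $id_X(x)=x$, so $x$ is a fixed point and in particular a periodic point; thus $Per(id_X)=X$, and the hypothesis $X=Per(f)$ of Proposition 1.1 is satisfied (together with the assumed $\dim X=0$). Proposition 1.1 then yields that $id_X$ is equicontinuous and has the strict periodic shadowing property. Applying Theorem 1.2 to $f=id_X$, whose hypotheses are precisely equicontinuity and the strict periodic shadowing property, we conclude that $id_X$ is topologically stable.

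Since this chain of implications is almost formal, there is no substantial obstacle in the proof of the corollary itself; the real content sits in Proposition 1.1 and Theorem 1.2. For completeness, I note that a direct self-contained verification is also available and may be illuminating. Given $\epsilon>0$, use compactness and zero-dimensionality of $X$ to fix a finite clopen partition $\{U_1,\dots,U_k\}$ of $X$ with $\mathrm{diam}(U_i)<\epsilon$ for every $i$, and set $\delta=\tfrac{1}{2}\min_{i\neq j}d(U_i,U_j)$, which is positive since the $U_i$ are pairwise disjoint compact sets and there are only finitely many pairs. For any $g\in\mathcal{H}(X)$ with $D(g,id_X)<\delta$, each $x\in X$ satisfies $d(g(x),x)<\delta$, which by the choice of $\delta$ forces $g(U_i)\subseteq U_i$ for every $i$. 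Pick a point $p_i\in U_i$ for each $i$ and define $h\colon X\to X$ by $h|_{U_i}\equiv p_i$; then $h$ is continuous (each $U_i$ is clopen), $d_{C^0}(h,id_X)<\epsilon$ because $d(p_i,x)\le\mathrm{diam}(U_i)<\epsilon$ for $x\in U_i$, and $h\circ g=h=id_X\circ h$ since $g$ preserves each $U_i$ and $h$ is constant on $U_i$. This establishes the topological stability of $id_X$ without appealing to Proposition 1.1 or Theorem 1.2.
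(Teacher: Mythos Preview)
Your main argument is exactly the paper's: the corollary is stated there as an immediate consequence of Proposition~1.1 and Theorem~1.2, and you invoke these results in the same order with the trivial observation $Per(id_X)=X$. The self-contained direct verification you append is also correct and is essentially the proof of Theorem~1.2 specialized to $f=id_X$ (the clopen partition plays the role of the $\delta$-cyclic decomposition, and the constant-on-pieces map $h$ is the same conjugating map constructed there).
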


This corollary shows that when the dimension of the space is zero, topologically stable homeomorphisms may exhibit a very non-hyperbolic behavior. Indeed, there is a conjecture that if a homeomorphism of a closed topological manifold is topologically stable, then its restriction to the non-wandering set is expansive (see \cite[Remark 2.4.12]{AH}). This conjecture strictly fails when the space is totally disconnected.

The subsequent theorems are general results on the topological stability of the homeomorphisms of Cantor spaces. Here, let us remark that the conjugating map $h\in\mathcal{C}(X)$ which will appear in the proof of Theorem 1.2 in Section 4 is degenerate in the sense that its image is a finite set, so possibly far from the whole space. When $X$ is a closed topological manifold, for sufficiently small $\epsilon>0$, $h\in\mathcal{C}(X)$ with $d_{C^0}(h,id_X)<\epsilon$ is surjective (see \cite[Remark 2.4.6]{AH}). Note that if we put an additional assumption that the map $h$ is surjective in the definition of the topological stability, then for example, the identity map $id_C$ of the Cantor ternary set $C$ is not topologically stable. It is worth mentioning that such a phenomenon was already observed by Walters in \cite{W}. In \cite{W}, it was proved that for any compact metric space $(X,d)$ and $f\in\mathcal{H}(X)$, if $f$ is expansive and has the shadowing property, then $f$ is topologically stable, and also if $\epsilon>0$ is sufficiently small, the conjugating map $h\in\mathcal{C}(X)$ with $d_{C^0}(h,id_X)<\epsilon$ must be unique. Hence, for example, the full shift $\sigma$ on two symbols is topologically stable, but there is a periodic homeomorphism $g$, i.e., $g^m=id$ for some $m>0$, which is arbitrary close to $\sigma$, so for such $g$, the image of the unique map $h$ near $id$ with $h\circ g=\sigma\circ h$ must be a finite set. Taking into account these observations, we make the following definition.

\begin{defi}
For $f\in\mathcal{H}(X)$, $f$ is said to be topologically stable in the strong sense (or s-topologically stable) if for any $\epsilon>0$, there is $\delta>0$ such that for every $g\in\mathcal{H}(X)$ with $D(f,g)<\delta$, there is a surjective $h\in\mathcal{C}(X)$ with $d_{C^0}(h,id_X)<\epsilon$ and $h\circ g=f\circ h$.
\end{defi}

In this definition, the full shift on two symbols and the identity map of the Cantor ternary set are not s-topologically stable. Then, it would be natural to ask even whether there exists an s-topologically stable homeomorphism of a Cantor space or not. The following theorem states that such a homeomorphism should have quite simple dynamics as the Morse-Smale diffeomorphisms.

\begin{thm}
Let $(X,d)$ be a Cantor space. If $f\in\mathcal{H}(X)$ is topologically stable in the strong sense, then $f$ has the shadowing property, and $\Omega(f)$ is a finite set.
\end{thm}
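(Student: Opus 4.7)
The shadowing part is an immediate consequence of the earlier material. Since s-topological stability requires the conjugating map $h$ only to be surjective \emph{in addition} to being continuous and close to $\mathrm{id}_X$, it implies ordinary topological stability. As $(X,d)$ is a Cantor space it is perfect and has the property* by Lemma~1.1, so Theorem~1.1 applies and yields both the shadowing property and the strict periodic shadowing property for $f$, giving in particular $\Omega(f)=CR(f)=\overline{Per(f)}$.

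For the finiteness of $\Omega(f)$, I would argue by contradiction: assume $\overline{Per(f)}$ is infinite. Then there exist distinct periodic points $p_n\in Per(f)$ with $p_n\to p^*$ for some $p^*\in X$, with periods $m_n$ that are either unbounded or eventually equal to a fixed $m$. Fix $\epsilon>0$ (to be specified), and let $\delta>0$ be the s-topological stability constant associated to $\epsilon$. The plan is to construct a perturbation $g$ of $f$ that is ``locally periodic'' on a clopen tube, in such a way that the required surjective $h$ cannot exist.

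Concretely, pick a periodic point $p$ of period $m$ together with pairwise disjoint clopen neighborhoods $V_0\ni p,\,V_1,\dots,V_{m-1}$ of $p,f(p),\dots,f^{m-1}(p)$ with $f(V_i)=V_{i+1\bmod m}$ and $\max_i\mathrm{diam}(V_i)<\delta$. Define $g\in\mathcal{H}(X)$ by $g=f$ outside $V_0$ and $g|_{V_0}=\psi$, where $\psi:=(f^{m-1}|_{V_1})^{-1}\colon V_0\to V_1$. A direct check shows $g$ is a homeomorphism, $D(f,g)\le\max_i\mathrm{diam}(V_i)<\delta$, and $g^m|_{V_0}=\mathrm{id}_{V_0}$, so the entire clopen set $V_0$ consists of $g$-periodic points of period dividing $m$. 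By s-topological stability there is a surjective $h\in\mathcal{C}(X)$ with $d_{C^0}(h,\mathrm{id}_X)<\epsilon$ and $h\circ g=f\circ h$; iterating this identity on $V_0$ gives $h(x)=f^m(h(x))$ for all $x\in V_0$, so
\[
h(V_0)\;\subseteq\;\mathrm{Fix}(f^m)\cap B_\epsilon(V_0).
\]
Combined with surjectivity of $h$, this forces every $y\in V_0\setminus h(V_0)$ to admit a preimage $z\in B_\epsilon(V_0)\setminus V_0$ (if $z\in V_0$ then $h(z)\in h(V_0)$, excluding $y$). One aims to show that, for suitable choice of $p,m$, the right-hand side of the displayed inclusion is too small while the ``halo'' $B_\epsilon(V_0)\setminus V_0$ has too little capacity to absorb all these preimages, contradicting surjectivity of $h$.

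\textbf{Main obstacle.} The delicate point is this last step. In a general Cantor metric space one cannot simultaneously make the clopen set $V_0$ small (needed for $D(f,g)<\delta$) and sufficiently well-separated from its complement for $B_\epsilon(V_0)\setminus V_0$ to be empty, so a single perturbation is unlikely to suffice. I therefore expect the argument to require an iteration of the construction over many accumulating periodic orbits $p_n\to p^*$, building up a global incompatibility between the forced inclusions $h(V_0^{(n)})\subseteq\mathrm{Fix}(f^{m_n})\cap B_\epsilon(V_0^{(n)})$ and the surjectivity of $h$ onto all of $X$. A case split is likely needed according to whether the periods $m_n$ are unbounded (a single long, essentially isolated tube suffices) or bounded (infinitely many fixed points of a common iterate $f^{M!}$ cluster near $p^*$ and force $h$ to collapse a set of diameter larger than $\epsilon$, violating $d_{C^0}(h,\mathrm{id}_X)<\epsilon$).
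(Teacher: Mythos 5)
Your first step is correct and matches the paper: s-topological stability trivially implies topological stability, so Lemma~1.1 and Theorem~1.1 give the shadowing property (and $\Omega(f)=CR(f)=\overline{Per(f)}$). The finiteness of $\Omega(f)$, however, is where your argument has a genuine gap, and it is exactly the one you flag yourself. The inclusion $h(V_0)\subseteq\mathrm{Fix}(f^m)\cap B_\epsilon(V_0)$ is fine, but it yields no contradiction with surjectivity: in a Cantor space a clopen set $V_0$ of diameter $<\delta$ typically has \emph{every} point within $\epsilon$ of its complement (e.g.\ $V_0=C\cap[0,3^{-k}]$ in the ternary Cantor set), so the ``$\epsilon$-interior'' of $V_0$ that would have to be filled from inside $V_0$ is empty, and the halo $B_\epsilon(V_0)\setminus V_0$ has plenty of room to supply preimages. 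Your bounded-period case is also not a contradiction as stated: if infinitely many fixed points of $f^{M}$ cluster near $p^*$, then $\mathrm{Fix}(f^{M})\cap B_\epsilon(V_0)$ is large and nothing forces $h$ to collapse a set of diameter $\ge\epsilon$. So the proposal does not prove finiteness of $\Omega(f)$; it reduces it to an unproved ``capacity'' estimate that fails in the stated form.

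The paper's route is global rather than local and rests on two nontrivial facts about $\mathcal{H}(X)$ for a Cantor space $X$: (i) there is a homeomorphism $g$ with residual conjugacy class and $h_{top}(g)=0$, so some $g$ with $D(f,g)<\delta$ has zero entropy; the surjective semiconjugacy $h\circ g=f\circ h$ makes $f$ a factor of $g$, hence $h_{top}(f)=0$, and by Moothathu's theorem (shadowing plus zero entropy) $f|_{\Omega(f)}$ is equicontinuous with $\dim\Omega(f)=0$, so every non-wandering point is periodic or lies in an odometer; (ii) expansive homeomorphisms with shadowing are dense, so $f$ is also a factor of a nearby such $F$; pushing forward the spectral decomposition of $\Omega(F)$ through the surjective $H$, each basic set maps to a transitive $f$-invariant set carrying a topologically mixing power, which is incompatible with an odometer and hence is a periodic orbit, and surjectivity of $H$ shows these finitely many periodic orbits exhaust $\Omega(f)$. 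If you want to salvage your approach, you would essentially need to rediscover a substitute for these density facts; the local perturbation near a single periodic orbit cannot see the odometer-like minimal sets that Theorem~1.1 alone does not exclude from $\Omega(f)$.
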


In the proof of Theorem 1.3, we use the fact that when $(X,d)$ is a Cantor space, there exists $g\in\mathcal{H}(X)$ such that its conjugacy class \[\{\phi\circ g\circ\phi^{-1}:\phi\in\mathcal{H}(X)\}\] is residual in $(\mathcal{H}(X),D)$, i.e., containing dense $G_\delta$ set (see \cite{AGW, KR}). As a simple application of Theorem 1.1, we prove here that such $g$ is not topologically stable, so the following theorem holds.

\begin{thm}
Let $(X,d)$ be a Cantor space. Then, generic $f\in\mathcal{H}(X)$ is topologically unstable.
\end{thm}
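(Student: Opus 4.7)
The plan is to invoke the result of Akin--Glasner--Weiss and Kechris--Rosendal (cited in the excerpt) that there exists $g\in\mathcal{H}(X)$ whose conjugacy class $\mathcal{G}=\{\phi\circ g\circ\phi^{-1}:\phi\in\mathcal{H}(X)\}$ is residual in $(\mathcal{H}(X),D)$, and then to show that this distinguished $g$ is itself not topologically stable. Since topological stability is invariant under topological conjugacy---given $f=\phi\circ g\circ\phi^{-1}$, uniform continuity of $\phi,\phi^{-1}$ on the compact space $X$ lets the $\epsilon$-$\delta$ data transfer through their moduli of continuity, and the semi-conjugacy $h$ required for any perturbation of $f$ is obtained by conjugating the corresponding semi-conjugacy for $g$---topological instability of $g$ will transfer to every element of $\mathcal{G}$, making the set of topologically unstable homeomorphisms residual in $(\mathcal{H}(X),D)$.

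To establish that $g$ is not topologically stable, I would argue by contradiction using Theorem 1.1. Assume $g$ is topologically stable. Since $(X,d)$ is a Cantor space, it is perfect and by Lemma 1.1 has the property*, so Theorem 1.1 applies and forces $g$ to have the strict periodic shadowing property (and $CR(g)=\overline{Per(g)}$). The contradiction is then drawn from a structural feature of the generic Cantor homeomorphism supplied by \cite{AGW, KR}: $g$ admits an invariant Cantor minimal subset $M\subset X$ on which $g|_M$ is conjugate to a nontrivial odometer; in particular $M\subset CR(g)$ while $M\cap Per(g)=\emptyset$. Minimality of $g|_M$ yields, for each small $\delta>0$, a $\delta$-cycle $(x_i)_{i=0}^{m}$ contained in $M$ of length $m=m(\delta)\to\infty$ that asymptotically fills $M$. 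Strict periodic shadowing would then furnish a periodic point $p\in X$ with $g^{m}(p)=p$ whose orbit $\epsilon$-shadows the cycle; pitting the restricted cycle-length structure coming from the odometer structure on $M$ against the periodic orbit structure of $g$ near $M$ produces the required contradiction.

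The main obstacle is the final matching step: using the precise description of the generic $g$ in \cite{AGW, KR} to rule out the existence of the shadowing periodic point $p$ with $g^{m}(p)=p$ lying within $\epsilon$ of the chosen cycle in $M$. In comparison, the reduction to a single $g$ via conjugacy-invariance and the direct appeal to Theorem 1.1 are routine.
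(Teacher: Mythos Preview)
Your reduction is the same as the paper's: invoke the Akin--Glasner--Weiss/Kechris--Rosendal result to pass to a single $g$ with residual conjugacy class, note conjugacy-invariance of topological stability, and then appeal to Lemma~1.1 and Theorem~1.1. The divergence is in how you finish. You try to exploit a putative odometer minimal subset $M\subset X$ for $g$ and to contradict strict periodic shadowing by a period-matching argument near $M$. As you yourself flag, this final step is not carried out, and in fact it is not clear it can be: nothing prevents $g$ from having periodic points of the required period sitting in $X\setminus M$ arbitrarily close to $M$, which would $\epsilon$-shadow your $\delta$-cycles in $M$. So the obstacle you identify is a genuine gap, not a detail.

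The paper sidesteps all of this by proving outright that $Per(g)=\emptyset$, using nothing about $g$ beyond the \emph{density} of its conjugacy class. The argument is a closed-condition/compactness trick: pick any $F\in\mathcal{H}(X)$ with $Per(F)=\emptyset$ (an odometer will do), approximate $F$ by conjugates $g_n=\phi_n g\phi_n^{-1}$, and note that if $g$ had a periodic point $p$ of period $m$ then each $g_n$ has the periodic point $p_n=\phi_n(p)$ of period $m$; a subsequential limit $p_{n_k}\to q$ then satisfies $F^m(q)=q$, contradicting $Per(F)=\emptyset$. Once $Per(g)=\emptyset$, Theorem~1.1 gives the contradiction immediately since $CR(g)\ne\emptyset$ on a nonempty compact space. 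This line is shorter, uses no structural description of the generic $g$, and does not require the strict periodic shadowing clause of Theorem~1.1 at all---only $CR(g)=\overline{Per(g)}$.
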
  

\begin{proof}
By Lemma 1.1 and Theorem 1.1, it is sufficient to prove that $Per(g)=\emptyset$. Assume the contrary, i.e., there are $p\in X$ and $m>0$ such that $g^m(p)=p$. We take a homeomorphism $F\in\mathcal{H}(X)$ such that $Per(F)=\emptyset$ (for instance, a homeomorphism which is topologically conjugate to an odometer). Then, since $\{\phi\circ g\circ\phi^{-1}:\phi\in\mathcal{H}(X)\}$ is dense in $(\mathcal{H}(X),D)$, there are $\phi_n\in\mathcal{H}(X)$, $n\ge1$, such that \[D(\phi_n\circ g\circ\phi_n^{-1},F)\to0\] as $n\to\infty$. For each $n\ge 1$, put $g_n=\phi_n\circ g\circ\phi_n^{-1}$ and $p_n=\phi_n(p)$. Then, we have $D(g_n^m,F^m)\to0$ as $n\to\infty$, and $g_n^m(p_n)=p_n$ for all $n\ge1$. By the compactness of $X$, there is an increasing sequence of integers $1\le n_1<n_2<\cdots$ such that $\lim_{k\to\infty}d(p_{n_k},p)=0$ for some $p\in X$. From \[d(F^m(p_{n_k}),p_{n_k})\le d(F^m(p_{n_k}),g_{n_k}^m(p_{n_k}))+d(g_{n_k}^m(p_{n_k}),p_{n_k})\le D(F^m,g_{n_k}^m)\] and $D(F^m,g_{n_k}^m)\to0$ as $k\to\infty$, it follows that $d(F^m(p),p)=\lim_{k\to\infty}d(F^m(p_{n_k}),p_{n_k})=0$, but this contradicts that $Per(F)=\emptyset$.
\end{proof}

This paper consists of six sections. Some preliminaries are given in Section 2. In Section 3, we prove Lemma 1.1, Theorem 1.1, and Corollary 1.1. We prove Theorem 1.2 and Proposition 1.1 in Section 4. In Section 5, the example of a homeomorphism of a Cantor space with the five properties listed above is given. Finally, we prove Theorem 1.3 in Section 6.

\section{Preliminaries}

In this section, we give some notations, definitions, and results used in this paper. As mentioned in Section 1, we prove the following lemma for the sake of completeness.

\begin{lem}
The two metrics $d_{C^0}$ and $D$ are equivalent metrics on $\mathcal{H}(X)$.
\end {lem}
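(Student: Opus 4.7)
The plan is to establish equivalence by showing that the two metrics induce the same convergent sequences (equivalently, the identity map between $(\mathcal{H}(X),d_{C^0})$ and $(\mathcal{H}(X),D)$ is a homeomorphism). One direction is immediate since $D(f,g)\ge d_{C^0}(f,g)$ by definition, so any $d_{C^0}$-ball of radius $r$ around $f$ contains the $D$-ball of the same radius.

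For the nontrivial direction, I would show that if $(f_n)_{n\ge1}\subset\mathcal{H}(X)$ and $f\in\mathcal{H}(X)$ satisfy $d_{C^0}(f_n,f)\to 0$, then $d_{C^0}(f_n^{-1},f^{-1})\to 0$ as well. Fix $\epsilon>0$. Since $X$ is compact and $f^{-1}$ is continuous, $f^{-1}$ is uniformly continuous, so there is $\eta>0$ such that $d(y,y')<\eta$ implies $d(f^{-1}(y),f^{-1}(y'))<\epsilon$ for all $y,y'\in X$. Choose $N$ so large that $d_{C^0}(f_n,f)<\eta$ for $n\ge N$. For any $x\in X$ and $n\ge N$, set $y=f_n^{-1}(x)$, so that $d(f(y),x)=d(f(y),f_n(y))\le d_{C^0}(f_n,f)<\eta$, and hence $d(f^{-1}(f(y)),f^{-1}(x))<\epsilon$, i.e., $d(f_n^{-1}(x),f^{-1}(x))<\epsilon$. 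Taking the supremum over $x$ gives $d_{C^0}(f_n^{-1},f^{-1})\le\epsilon$ for all $n\ge N$.

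Combining the two directions, $d_{C^0}(f_n,f)\to 0$ if and only if $D(f_n,f)\to 0$, so $d_{C^0}$ and $D$ generate the same topology on $\mathcal{H}(X)$, which is the intended meaning of equivalence here. There is no substantial obstacle; the only point worth care is invoking the uniform continuity of $f^{-1}$ (rather than of $f_n^{-1}$, which would not be uniform in $n$), making use of the compactness of $X$ and the hypothesis that $f$ itself is a homeomorphism.
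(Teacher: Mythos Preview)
Your proof is correct and follows essentially the same approach as the paper: both use uniform continuity of $f^{-1}$ to bound $d(f_n^{-1}(x),f^{-1}(x))$ via the identity $d(f(f_n^{-1}(x)),x)=d(f(f_n^{-1}(x)),f_n(f_n^{-1}(x)))\le d_{C^0}(f,f_n)$. The only cosmetic difference is that you phrase the nontrivial direction in terms of convergent sequences, while the paper gives the direct $\epsilon$--$\delta$ statement at a fixed $f$.
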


\begin{proof}
It is obvious from the definition that for any $\epsilon>0$, $D(f,g)<\epsilon$ implies $d_{C^0}(f,g)<\epsilon$ for all $f,g\in\mathcal{H}(X)$. Conversely, let us prove that for any given $f\in\mathcal{H}(X)$ and $\epsilon>0$, there is $\delta>0$ such that every $g\in\mathcal{H}(X)$ with $d_{C^0}(f,g)<\delta$ satisfies $D(f,g)<\epsilon$. Take $0<\delta<\epsilon$ so small that $d(x,y)<\delta$ implies $d(f^{-1}(x), f^{-1}(y))<\epsilon$ for all $x,y\in X$, and suppose $g\in\mathcal{H}(X)$ satisfies $d_{C^0}(f,g)<\delta$. Then, for any $x\in X$, we have \[d(f(g^{-1}(x)),x)=d(f(g^{-1}(x)), g(g^{-1}(x)))<\delta,\] so by the choice of $\delta$, \[d(f^{-1}(x), g^{-1}(x))=d(f^{-1}(x), f^{-1}(f(g^{-1}(x))))<\epsilon.\] This implies $d_{C^0}(f^{-1},g^{-1})<\epsilon$ and so $D(f,g)<\epsilon$.
\end{proof}

\subsection{{\it Recurrence}}

Let $f\in\mathcal{H}(X)$. A point  $x\in X$ is said to be {\em periodic} if $f^n(x)=x$ for some $n>0$, {\em regularly recurrent} if for any neighborhood $U$ of $x$, there is $k>0$ such that $f^{kn}(x)\in U$ for all $n\ge0$, {\em minimal} (or {\em almost periodic}) if the restriction of $f$ to the orbit closure $\overline{O_f(x)}=\overline{\{f^n(x):n\ge0\}}$ is minimal, and {\em non-wandering} if for every neighborhood $U$ of $x$, we have $f^n(U)\cap U\ne\emptyset$ for some $n>0$. We denote by $Per(f)$, $RR(f)$, $M(f)$, and $\Omega(f)$ the sets of periodic, regularly recurrent, minimal, and non-wandering points for $f$, respectively. As mentioned in Section 1, we also denote by $CR(f)$ the set of chain recurrent points for $f$. Note that $Per(f)\subset RR(f)\subset M(f)\subset\Omega(f)\subset CR(f)$.

\subsection{{\it Continuous shadowing property}}

Let $(X,d)$ be a compact metric space and let $X^\mathbb{Z}$ be the set of all bi-infinite sequences of points in $X$. We define a metric $\tilde{d}$ on  $X^\mathbb{Z}$ by
\begin{equation*}
\tilde{d}(x,y)=\sup_{i\in\mathbb{Z}}2^{-|i|}d(x_i,y_i)\quad\mbox{for all}\quad x=(x_i)_{i\in\mathbb{Z}},\:y=(y_i)_{i\in\mathbb{Z}}\in X^\mathbb{Z},
\end{equation*}
which is compatible with the product topology. For $f\in\mathcal{H}(X)$ and $\delta>0$, let $P(f,\delta)$ denote the subset of $X^\mathbb{Z}$ consisting of all $\delta$-pseudo orbits of $f$. Then, for $f\in\mathcal{H}(X)$, we say that $f$ has the {\em continuous shadowing property} if for any $\epsilon>0$, there are $\delta>0$ and a continuous map $r:P(f,\delta)\to X$ such that
\begin{equation*}
d(f^i(r(x)),x_i)\le\epsilon\quad\mbox{for all}\quad x=(x_i)_{i\in\mathbb{Z}}\in P(f,\delta)\:,\: i\in\mathbb{Z}.
\end{equation*}
It is easy to see that, given two compact metric spaces $(X_i,d_i)$, $i=1,2$, if $f_1\in\mathcal{H}(X_1)$ and $f_2\in\mathcal{H}(X_2)$ are topologically conjugate, then $f_1$ has the continuous shadowing property iff $f_2$ has the same property.

\subsection{{\it Equicontinuity}}

Given $f\in\mathcal{H}(X)$, we say that $f$ is {\em equicontinuous at $x\in X$} if for any $\epsilon>0$, there is $\delta>0$ such that $d(x,y)\le\delta$ implies $\sup_{i\in\mathbb{Z}}d(f^i(x),f^i(y))\le\epsilon$ for all $y\in X$. Then, $f$ is said to be {\em equicontinuous} if for any $\epsilon>0$, there is $\delta>0$ such that $d(x,y)\le\delta$ implies $\sup_{i\in\mathbb{Z}}d(f^i(x),f^i(y))\le\epsilon$ for all $x,y\in X$. By the compactness of $X$, we easily see that $f$ is equicontinuous iff $f$ is equicontinuous at every $x\in X$. Every equicontinuous homeomorphism $f\in\mathcal{H}(X)$ is known to satisfy $X=M(f)$ and so $X=CR(f)$. As for the relation with the shadowing property, we have the following lemma.

\begin{lem}
Let $(X,d)$ be a compact metric space and let $f\in\mathcal{H}(X)$ be an equicontinuous homeomorphism. Then, the following holds.
\begin{itemize}
\item[(1)] $f$ has the shadowing property if and only if $\dim X=0$.
\item[(2)] If $\dim X=0$, then $X=RR(f)$, and for any $x\in X$, $x\in Per(f)$, or $f|_{\overline{O_f(x)}}$ is topologically conjugate to an odometer.
\end{itemize}
\end{lem}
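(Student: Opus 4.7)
The plan is to first reduce to the isometric case by replacing $d$ with the $f$-invariant metric $\rho(x,y):=\sup_{n\in\mathbb{Z}}d(f^n(x),f^n(y))$, which is well-defined and topologically equivalent to $d$ by the equicontinuity of $\{f^n\}_{n\in\mathbb{Z}}$. A convenient auxiliary object is the compact abelian metric group $G:=\overline{\{f^n:n\in\mathbb{Z}\}}$ in the uniform topology on $\mathcal{H}(X)$, which is compact by Arzel\`a--Ascoli and whose orbits on $X$ coincide with the $f$-orbit closures.

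For the reverse implication in (1), assume $\dim X=0$ and fix $\epsilon>0$. Zero-dimensionality supplies a finite clopen partition $\mathcal{P}$ of $\rho$-mesh less than $\epsilon$; let $\eta>0$ be the minimum $\rho$-distance between distinct elements of $\mathcal{P}$. Since $G$ acts by $\rho$-isometries and elements of $\mathcal{P}$ are $\eta$-separated, every $g\in G$ within uniform distance $\eta/2$ of the identity must stabilize $\mathcal{P}$ set-wise, so the set-wise stabilizer of $\mathcal{P}$ is an open, hence finite-index, subgroup of $G$, and in particular $f$ permutes $\mathcal{P}$. Taking $\delta$ to be the Lebesgue number of $\mathcal{P}$, any $\delta$-pseudo orbit $(x_i)_{i\ge 0}$ satisfies $x_i\in f^i(P_0)$ by induction, where $P_0\in\mathcal{P}$ contains $x_0$, so any $y\in P_0$ satisfies $\rho(f^i(y),x_i)<\epsilon$ for every $i\ge 0$ and therefore $\epsilon$-shadows $(x_i)$. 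For the forward implication, if $\dim X>0$ then $X$ contains a non-degenerate connected component $C$ with two points $x_\ast,y_\ast$ at $\rho$-distance $\epsilon_0>0$. For each $\delta>0$ compact connectedness of $C$ produces a $\delta$-chain $c_0=x_\ast,\dots,c_k=y_\ast$ inside $C$, and the sequence $x_i:=f^i(c_i)$ for $0\le i\le k$, extended by $x_i:=f^{i-k}(y_\ast)$ for $i>k$, is a $\delta$-pseudo orbit because $\rho(f(x_i),x_{i+1})=\rho(c_i,c_{i+1})<\delta$. Any $\epsilon$-shadowing point $z$ satisfies $\rho(z,c_i)\le\epsilon$ for $0\le i\le k$, whence $\rho(x_\ast,y_\ast)\le 2\epsilon$; choosing $\epsilon<\epsilon_0/2$ yields a contradiction.

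For (2), fix $x\in X$ and use that equicontinuity gives $X=M(f)$, so $Y:=\overline{O_f(x)}$ is a minimal $f$-invariant compact set. The transitive action of $G$ identifies $Y$ with $G/H_x$ as topological $G$-spaces, where $H_x:=\{g\in G:g(x)=x\}$, and $f$ acts on $G/H_x$ by translation by its image, which topologically generates the quotient. Because $Y\subseteq X$ is zero-dimensional, $G/H_x$ is a compact zero-dimensional monothetic abelian group, hence an inverse limit of finite cyclic groups: if this inverse limit is eventually constant then $x\in Per(f)$, otherwise the translation by $f$ is by definition conjugate to an odometer. In either case $x$ is regularly recurrent, because every clopen neighborhood of $x$ contains a coset of some clopen finite-index subgroup $K\subseteq G/H_x$, and the power $f^k$ corresponding to the order of $[f]$ in $(G/H_x)/K$ maps $x$ back into that coset at every iterate.

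The main obstacle is the structural identification in (2): proving that a compact zero-dimensional monothetic abelian group is procyclic profinite is a nontrivial fact, handled either by Pontryagin duality on the discrete dual or by citing the classical structure theorem for minimal equicontinuous compact metric dynamical systems (as in Auslander or Ellis-Gottschalk). The partition-stabilizer step in (1)$(\Leftarrow)$ is routine, but one must observe that the uniform clopen separation of $\mathcal{P}$ makes set-wise stabilization an open condition on $G$; and in (1) forward one must notice that compactness and connectedness together force arbitrarily fine $\delta$-chains within $C$, which is the input that lets the isometry-versus-drift argument go through.
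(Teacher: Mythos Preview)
Your approach differs substantially from the paper's, which simply cites \cite[Theorem~4]{M} for (1) and, for (2), invokes its own Lemma~4.2 (the cyclic-decomposition diameters tend to zero) together with \cite[Corollary~2.5]{BK}. Your Ellis-group argument is more self-contained: identifying $\overline{O_f(x)}$ with a quotient $G/H_x$ of the compact monothetic abelian group $G$ yields the procyclic (hence odometer) structure directly, whereas the paper outsources that classification to the literature. What the paper's route buys is brevity; what yours buys is independence from the cited results.

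There is, however, a genuine gap in the reverse direction of (1). From the fact that the stabilizer $H=\{g\in G: g(P)=P\text{ for all }P\in\mathcal P\}$ is open (hence of finite index) you conclude ``in particular $f$ permutes $\mathcal P$'', but this does not follow. On the dyadic odometer with the clopen partition $\mathcal P=\{[00],[01],[1]\}$ the stabilizer is open, yet $f([00])=[10]\notin\mathcal P$. What \emph{does} follow is that $G/H$ is finite cyclic (being a finite quotient of a monothetic group), so $f^k\in H$ for some $k\ge 1$, i.e.\ $f^k(P)=P$ for every $P\in\mathcal P$. The fix is to replace $\mathcal P$ by the refinement $\mathcal Q:=\bigvee_{j=0}^{k-1}f^{-j}\mathcal P$, which \emph{is} permuted by $f$ (using $f^k(P)=P$) and still has $\rho$-mesh below $\epsilon$ since $f$ is a $\rho$-isometry; your induction and shadowing estimate then run verbatim on $\mathcal Q$. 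Separately, in the forward direction of (1) the extension ``$x_i:=f^{i-k}(y_\ast)$ for $i>k$'' breaks the pseudo-orbit condition at $i=k$, since $x_k=f^k(y_\ast)$ while $x_{k+1}=f(y_\ast)$; you want $x_i:=f^i(y_\ast)$ for $i\ge k$, after which the contradiction $\rho(x_\ast,y_\ast)\le 2\epsilon$ goes through as you wrote.
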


\begin{proof}
For (1), see \cite[Theorem 4]{M}. Then, we shall prove (2) as follows. In fact, Lemma 4.2 in Section 4 implies that if $f\in\mathcal{H}(X)$ is equicontinuous, and $\dim X=0$, then $X=RR(f)$. Note that, as $RR(f)\subset M(f)$, $f|_{\overline{O_f(x)}}$ is minimal for every $x\in X$. On the other hand, due to \cite[Corollary 2.5]{BK}, we know that for any continuous self-map  $g\in\mathcal{C}(Y)$ of a compact metric space $Y$, if $g$ is minimal, and $Y=RR(g)$, then $Y$ is a periodic orbit of $g$, or $g$ is topologically conjugate to an odometer. Thus, by taking $Y=\overline{O_f(x)}$ and $g=f|_{\overline{O_f(x)}}$, we obtain (2).
\end{proof}

\subsection{{\it Odometers}}

An {\em odometer} (also called an {\em adding machine}) is defined as follows. Given a strictly increasing sequence $m=(m_k)_{k\ge1}$ of positive integers such that $m_1\ge2$ and $m_k$ divides $m_{k+1}$ for each $k=1,2,\dots$, we define
\begin{itemize}
\item $X(k)=\{0,1,\dots,m_k-1\}$ (with the discrete topology).
\item $X_m=\{(x_k)_{k\ge1}\in\prod_{k\ge1}X(k):x_k\equiv x_{k+1}\pmod{m_k},\:\forall k\ge1\}$.
\item $g(x)_k=x_k+1\pmod{m_k}$ for all $x=(x_k)_{k\ge1}\in X_m$, $k\ge1$.
\end{itemize}
The set $X_m$ has the subspace topology induced by the product topology on $\prod_{k\ge1}X(k)$, and the resulting dynamical system $(X_m,g)$ is called an odometer with the periodic structure $m$. It is immediate from the definition that $g:X_m\to X_m$ is an equicontinuous homeomorphism, and in fact, the odometers are characterized as the minimal equicontinuous systems on Cantor spaces (see \cite{Ku}).

\section{Proof of  Lemma 1.1, Theorem 1.1, and Corollary 1.1}

In this section, we prove Lemma 1.1, Theorem 1.1, and Corollary 1.1. We first prove Lemma 1.1. Here, note that the property* is a topological property, that is, if two compact metric spaces $(X_1,d_1)$ and $(X_2,d_2)$ are homeomorphic, then $(X_1,d_1)$ has the property* iff $(X_2,d_2)$ has the property*. Therefore, it is sufficient to prove the following lemma.

\begin{lem}
Let $X=\{0,1\}^\mathbb{N}$ and let $d:X\times X\to[0,\infty)$ be the metric defined by
\begin{equation*}
d(x,y)=\sup_{i\ge1}2^{-i}|x_i-y_i|\quad\mbox{for all}\quad x=(x_i)_{i\ge1},\:y=(y_i)_{i\ge1}\in X.
\end{equation*}
Given $\delta>0$ and an integer $n>0$, suppose that two $n$-tuples $\zeta=(x_1,x_2,\dots,x_n), \eta=(y_1,y_2,\dots,y_n)\in X^n$ are both proper and $d_n(\zeta,\eta)<\delta$. Then, there exists $\phi\in\mathcal{H}(X)$ such that $D(\phi,id_X)<\delta$ and $\phi^{(n)}(\zeta)=\eta$.
\end{lem}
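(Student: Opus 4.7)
The plan is to localize the construction using the clopen cylinder structure of $X = \{0,1\}^{\mathbb{N}}$, then invoke the $n$-homogeneity of Cantor spaces within each small cylinder. First, I would fix the unique non-negative integer $m$ with $2^{-m} \ge \delta > 2^{-m-1}$. For each $i$, the inequality $d(x_i, y_i) < \delta \le 2^{-m}$ forces $x_i$ and $y_i$ to agree on coordinates $1, \dots, m$; otherwise the first differing coordinate $j \le m$ would give $d(x_i, y_i) \ge 2^{-j} \ge 2^{-m} \ge \delta$, a contradiction. Hence $x_i$ and $y_i$ lie in a common length-$m$ cylinder $E_i$, which is clopen, has diameter at most $2^{-m-1} < \delta$, and is itself homeomorphic to $X$ (so is a Cantor space).

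Next, I would partition $\{1, \dots, n\}$ into equivalence classes $I_1, \dots, I_s$ by declaring $i \sim j$ iff $E_i = E_j$, and write $E^{(r)}$ for the common cylinder of the indices in $I_r$. Within each $E^{(r)}$, the two families $(x_i)_{i \in I_r}$ and $(y_i)_{i \in I_r}$ consist of pairwise distinct points, since $\zeta$ and $\eta$ are proper. The crucial step is to produce, for each $r$, a homeomorphism $\phi_r : E^{(r)} \to E^{(r)}$ with $\phi_r(x_i) = y_i$ for every $i \in I_r$. This is exactly an instance of the $n$-homogeneity of the Cantor space: given any two tuples of pairwise distinct points of the same finite length in a Cantor space, a self-homeomorphism matching them always exists.

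Finally, I would paste the $\phi_r$'s into a global map $\phi : X \to X$ by setting $\phi|_{E^{(r)}} = \phi_r$ on each $E^{(r)}$ and $\phi = id_X$ on the complement $X \setminus \bigcup_r E^{(r)}$. Because the $E^{(r)}$'s are finitely many pairwise disjoint clopen sets, $\phi \in \mathcal{H}(X)$; and because every point moves only within its containing cylinder (or not at all), we obtain $d(\phi(x), x) \le 2^{-m-1} < \delta$ uniformly in $x$, and the same bound holds for $\phi^{-1}$, yielding $D(\phi, id_X) < \delta$. By construction $\phi^{(n)}(\zeta) = \eta$. The main obstacle is the $n$-homogeneity claim invoked inside each $E^{(r)}$; I expect to justify it by the standard partition argument, choosing clopen partitions $\{U_i\}_{i \in I_r}$ and $\{V_i\}_{i \in I_r}$ of $E^{(r)}$ with $x_i \in U_i$ and $y_i \in V_i$, observing that every nonempty clopen subset of a Cantor space is again a Cantor space, and on each pair $(U_i, V_i)$ applying a $1$-homogeneity homeomorphism sending $x_i$ to $y_i$ (which in $\{0,1\}^{\mathbb{N}}$ is realised explicitly by coordinatewise XOR translations, combined if necessary with further clopen refinement).
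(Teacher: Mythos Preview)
Your approach is correct and takes a genuinely different route from the paper. The paper first isolates the special case in which all $2n$ points $x_1,\dots,x_n,y_1,\dots,y_n$ are pairwise distinct: choosing $K$ large enough to separate them by depth-$K$ cylinders $[\alpha_i]\ni x_i$, $[\beta_i]\ni y_i$, it builds an explicit involution $\phi$ that acts as coordinatewise XOR by $w_i=x_i\oplus y_i$ on $[\alpha_i]\cup[\beta_i]$ and as the identity elsewhere, so that $\phi$ swaps $[\alpha_i]\leftrightarrow[\beta_i]$ and moves each point only within the common depth-$m$ cylinder $[s_i]$. The general case (where some $x_i$ may equal some $y_j$) is then handled by inserting an auxiliary proper tuple $\theta$ close to $\zeta$, disjoint from both $\zeta$ and $\eta$, and composing two such involutions with a small loss in the distance estimate. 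Your localization to the depth-$m$ cylinders $E^{(r)}$ together with $n$-homogeneity inside each one dispenses with the $2n$-distinctness hypothesis altogether, so you avoid the intermediate tuple and the two-step composition; the price is that you invoke $n$-homogeneity of the Cantor set (your clopen-partition sketch is fine, though note that the map $U_i\to V_i$ you need is not literally $1$-homogeneity but the composition of any homeomorphism $U_i\to V_i$ with a self-homeomorphism of $V_i$ moving one point), whereas the paper's XOR involution is entirely explicit and self-contained. One minor point: your choice of $m$ presupposes $\delta\le 1$; for $\delta>1$ simply take $m=0$, since ${\rm diam}\,X=1/2$ in this metric.
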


Some notations are needed for the proof. For $w\in\{0,1\}^K$ with $K\ge0$, we denote by $[w]$ the following subset of $\{0,1\}^\mathbb{N}$.
\begin{equation*}
[w]=\{(x_i)_{i\ge 1}\in\{0,1\}^\mathbb{N}: x_i=w_i,\: 1\le\forall i\le K\}\subset\{0,1\}^\mathbb{N}.
\end{equation*}
When $K=0$, $w$ is defined to be the empty word $\varepsilon$, and $[\varepsilon]=\{0,1\}^\mathbb{N}$.

For $x=(x_i)_{i\ge1},\:y=(y_i)_{i\ge1}\in\{0,1\}^\mathbb{N}$, we define $z=x+y\in\{0,1\}^\mathbb{N}$ by
\begin{equation*}
z_i=x_i+y_i\pmod{2}\quad \mbox{for each}\: i\ge1.
\end{equation*}

\begin{proof}[Proof of Lemma 3.1]
First, we consider the case where $x_i\ne y_j$ for all $1\le i, j\le n$. We may assume  $\delta\in(0,1]$ and take an integer $m\ge0$ such that $\delta\in(2^{-m-1},2^{-m}]$. By $d_n(\zeta,\eta)<\delta$ and the definition of the metric $d$, we have $d(x_i,y_i)\le 2^{-m-1}$ for every $1\le i\le n$.  Since $x_1,x_2,\dots,x_n,y_1,y_2,\dots,y_n$ are pairwise distinct points in $X$, for sufficiently large $K\ge1$, we can choose $2n$-distinct elements $\alpha_1,\alpha_2,\dots,\alpha_n,\beta_1,\beta_2,\dots,\beta_n\in\{0,1\}^K$ such that $x_i\in[\alpha_i]$ and $y_i\in[\beta_i]$ for every $1\le i\le n$. Then, for each $1\le i\le n$, since $d(x_i,y_i)\le 2^{-m-1}$, there are $s_i\in\{0,1\}^m$ and $t_i, u_i\in\{0,1\}^{K-m}$ for which $\alpha_i=s_it_i$ and $\beta_i=s_iu_i$. For each $1\le i\le n$, we define $w_i=(w_{i,j})_{j\ge1}\in\{0,1\}^\mathbb{N}$ by 
\begin{equation*}
w_{i,j}=
\begin{cases}
0 &\text{if $x_{i,j}=y_{i,j}$}\\
1 &\text{if $x_{i,j}\ne y_{i,j}$}
\end{cases}
\end{equation*}
for every $j\ge1$, implying that $y_i=x_i+w_i$. Then, we define $\phi:X\to X$ by 
\begin{equation*}
\phi(x)=
\begin{cases}
x+w_i &\text{if $x\in[\alpha_i]\cup[\beta_i]$ for $1\le i\le n$}\\
x &\text{if $x\in[\gamma]$ for $\gamma\in\{0,1\}^K\setminus\{\alpha_1,\alpha_2,\dots,\alpha_n,\beta_1,\beta_2,\dots,\beta_n\}$}
\end{cases}.
\end{equation*}
By the definition, it is clear that $\phi(x_i)=y_i$ for each $1\le i\le n$, i.e., $\phi^{(n)}(\zeta)=\eta$. Note that $\phi([\alpha_i])=[\beta_i]$, $\phi([\beta_i])=[\alpha_i]$ for each $1\le i\le n$. Then, it is also clear that $\phi$ is continuous and $\phi^2=id_X$, therefore $\phi\in\mathcal{H}(X)$. For all $1\le i\le n$ and $x\in[\alpha_i]$, we have 
\begin{equation*}
d(\phi(x),x)\le{\rm diam}[s_i]\le2^{-m-1}<\delta
\end{equation*}
because $\phi(x)\in[\beta_i]$ and $[\alpha_i]\cup[\beta_i]\subset[s_i]$. Similarly, we have $d(\phi(x),x)<\delta$ for all $1\le i\le n$ and $x\in[\beta_i]$. Hence, it holds that $d_{C^0}(\phi, id_X)=d_{C^0}(\phi^{-1}, id_X)<\delta$, so we have $D(\phi,id_X)<\delta$.

Now, let us give the proof for the general case without the assumption above. By $d_n(\zeta,\eta)<\delta$, we can choose $d_n(\zeta,\eta)<\rho<\delta$ and $\epsilon>0$ so that $\rho+2\epsilon<\delta$. Since $(X,d)$ is perfect, we can take an $n$-tuple $\theta=(z_1,z_2,\dots,z_n)\in X^n$ satisfying the following properties.
\begin{itemize}
\item[(1)] $x_1,x_2,\dots,x_n,z_1,z_2,\dots,z_n$ are pairwise distinct, and $y_1,y_2,\dots,y_n,z_1,z_2,\dots,z_n$ are also pairwise distinct points in $X$.
\item[(2)] $d_n(\zeta,\theta)<\epsilon$.
\end{itemize}
Note that we have $d_n(\theta,\eta)\le d_n(\zeta,\eta)+d_n(\zeta,\theta)<\rho+\epsilon$. Then, as it is already proved, there are $\phi_1,\phi_2\in\mathcal{H}(X)$ such that $\phi_1^{(n)}(\zeta)=\theta$, $\phi_2^{(n)}(\theta)=\eta$, $D(\phi_1,id_X)<\epsilon$, and $D(\phi_2,id_X)<\rho+\epsilon$. Let $\phi=\phi_2\circ \phi_1\in\mathcal{H}(X)$. Then, we have $\phi^{(n)}(\zeta)=\eta$, and $D(\phi,id_X)<\rho+2\epsilon<\delta$, which proves the lemma.
\end{proof}

Then, we give a proof of Theorem 1.1. As mentioned in Section 1, it was essentially proved by Walters in \cite{W} that, under the assumption of Theorem 1.1, topologically stable homeomorphisms satisfy the shadowing property, so we may omit its proof. In the following proof, we prove that those homeomorphisms satisfy the strict periodic shadowing property by a modification of Walters' argument.

\begin{proof}[Proof of Theorem 1.1]
For any given $\epsilon>0$, we take $\beta>0$, $\gamma>0$,  $\delta>0$, and $\rho>0$ with the following properties.
\begin{itemize}
\item[(1)] Given $g\in\mathcal{H}(X)$, if $D(f,g)<\beta$, then there is $h\in\mathcal{C}(X)$ such that $d_{C^0}(h, id_{X})<\epsilon/2$ and $h\circ g=f\circ h$.
\item[(2)] For any $g\in\mathcal{H}(X)$, $d_{C^0}(f,g)<\gamma$ implies $D(f,g)<\beta$.
\item[(3)] For $\gamma>0$, $\delta>0$ is so small as in the definition of the property*.
\item[(4)] $0<\rho<\min\{\delta/3,\epsilon/2\}$, and $d(a,b)<\rho$ implies $d(f(a),f(b))<\delta/3$ for all $a,b\in X$.
\end{itemize}

Suppose that $(x_i)_{i=0}^m$ is a $\delta/3$-cycle of $f$, and let us prove that there is $p\in X$ satisfying $f^m(p)=p$ and $d(f^i(p),x_i)\le\epsilon$ for all $0\le i\le m$. Put $x=x_0=x_m$ and $\zeta=(x_0,x_1,\dots,x_m)\in X^{m+1}$. Then, since $(X,d)$ is perfect, we can take an $(m+1)$-tuple $\eta=(z_0,z_1,\dots,z_m)\in X^{m+1}$ with the following properties.

\begin{itemize}
\item[(5)] $z_0=z_m=x$ and $d_{m+1}(\zeta,\eta)<\rho$. 
\item[(6)] $(y_1,y_2,\dots,y_m)=(f(z_0),f(z_1),\dots,f(z_{m-1}))$, $(z_1,z_2,\dots,z_m)\in X^m$ are both proper $m$-tuples.
\end{itemize}
Then, by (4) and (5), we have
\begin{equation*}
\begin{aligned}
d(z_i,y_i)&=d(z_i,f(z_{i-1}))\\
&\le d(z_i,x_i)+ d(x_i,f(x_{i-1}))+d(f(x_{i-1}),f(z_{i-1}))<\rho+\delta/3+\delta/3<\delta
\end{aligned}
\end{equation*}
for every $1\le i\le m$. By (3) and (6), there exists $\phi\in\mathcal{H}(X)$ such that $D(\phi,id_X)<\gamma$, and $\phi(y_i)=z_i$ for every $1\le i\le m$. Put $g=\phi\circ f\in\mathcal{H}(X)$ and note that we have
\begin{equation*}
g(z_i)=\phi(f(z_i))=\phi(y_{i+1})=z_{i+1}
\end{equation*}
for every $0\le i<m$. This implies $g^i(z_0)=z_i$ for each $0\le i\le m$. Since $d_{C^0}(f,g)<\gamma$, by (1) and (2), there is $h\in\mathcal{C}(X)$ such that $d_{C^0}(h, id_{X})<\epsilon/2$ and $h\circ g=f\circ h$. Put $p=h(z_0)$. Then, since $z_0=z_m$ by (5), we have
\begin{equation*}
f^m(p)=f^m(h(z_0))=h(g^m(z_0))=h(z_m)=h(z_0)=p.
\end{equation*}
Moreover, for every $0\le i\le m$, we have
\begin{equation*}
d(f^i(p),z_i)=d(f^i(h(z_0)),z_i)=d(h(g^i(z_0)),z_i)=d(h(z_i),z_i)<\epsilon/2,
\end{equation*}
and so (5) yields
\begin{equation*}
d(f^i(p),x_i)\le d(f^i(p),z_i)+d(z_i,x_i)<\epsilon/2+\rho<\epsilon,
\end{equation*}
which finishes the proof.
\end{proof}

To prove Corollary 1.1, we need the following simple lemma.

\begin{lem}
Let $(X,d)$ be a compact metric space and let $f\in\mathcal{H}(X)$. If $\dim X=0$ and $f$ is equicontinuous, then $f$ has the continuous shadowing property.
\end{lem}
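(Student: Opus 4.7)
The plan is to reduce the lemma to producing, for each $\epsilon>0$, a finite clopen partition $\mathcal{P}$ of $X$ with $\operatorname{diam}A<\epsilon$ for every $A\in\mathcal{P}$, such that $f$ permutes the atoms of $\mathcal{P}$. Granting this, let $\delta>0$ be a Lebesgue number for $\mathcal{P}$ and define $r\colon P(f,\delta)\to X$ by $r((x_i)_{i\in\mathbb{Z}})=x_0$. For any $\delta$-pseudo orbit $(x_i)$, the inequality $d(f(x_i),x_{i+1})\le\delta$ forces $f(x_i)$ and $x_{i+1}$ to lie in a common atom; combined with the fact that $f$ permutes $\mathcal{P}$, an easy induction in both directions shows that $f^i(r((x_i)))$ and $x_i$ lie in a common atom for every $i\in\mathbb{Z}$, so $d(f^i(r((x_i))),x_i)<\epsilon$. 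The map $r$ is continuous because it is just projection onto the zeroth coordinate.

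To construct $\mathcal{P}$, I first pick any finite clopen partition $\mathcal{P}_0=\{B_1,\ldots,B_m\}$ of $X$ with $\operatorname{diam}B_j<\epsilon$ for each $j$, which is possible since $\dim X=0$. Define the itinerary map $\phi\colon X\to\{1,\ldots,m\}^{\mathbb{Z}}$ by setting $\phi(x)_i$ to be the unique $j$ with $f^i(x)\in B_j$. Each coordinate of $\phi$ is continuous since the $B_j$ are clopen and each $f^i$ is continuous, so $\phi$ itself is continuous. Let $\alpha>0$ be smaller than the minimum distance between distinct atoms of $\mathcal{P}_0$, and use equicontinuity to pick $\delta_0>0$ with $d(x,y)<\delta_0\Rightarrow\sup_{i\in\mathbb{Z}}d(f^i(x),f^i(y))<\alpha$. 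Then $d(x,y)<\delta_0$ forces $f^i(x)$ and $f^i(y)$ to lie in a common atom of $\mathcal{P}_0$ for every $i$, i.e., $\phi(x)=\phi(y)$. Thus each fiber of $\phi$ is both open (by the local constancy just established) and closed (by continuity of $\phi$), hence clopen. By compactness of $X$, the partition of $X$ into fibers of $\phi$ is finite; this is the desired $\mathcal{P}$. Every atom of $\mathcal{P}$ is contained in some $B_j$, so has diameter less than $\epsilon$, and the identity $\phi\circ f=\sigma\circ\phi$, where $\sigma$ is the left shift on $\{1,\ldots,m\}^{\mathbb{Z}}$, shows that $f$ maps each fiber of $\phi$ onto another fiber, so $f$ permutes $\mathcal{P}$.

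The main point on which the argument hinges is the finiteness of $\mathcal{P}$: a priori the number of itineraries realized by points of $X$ could be infinite. Equicontinuity is precisely what rules this out, by forcing $\phi$ to be locally constant and hence, by compactness of $X$, to have finite image. Once this is granted, the remainder is a routine coding-style construction in which $r$ can be taken to be the zeroth-coordinate projection.
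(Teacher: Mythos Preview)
Your argument is correct, and in fact both you and the paper define $r$ as the zeroth-coordinate projection $r((x_i)_{i\in\mathbb{Z}})=x_0$. The difference lies in how one shows that $x_0$ actually $\epsilon$-shadows the pseudo orbit. The paper's proof is shorter but not self-contained: it invokes Lemma~2.2(1) (itself a citation to \cite{M}) to obtain the ordinary shadowing property, so that some point $p$ $\gamma$-shadows the pseudo orbit; then equicontinuity gives $\sup_i d(f^i(x_0),f^i(p))\le\epsilon/2$ from $d(x_0,p)\le\gamma$, and the triangle inequality finishes. Your route instead builds, from scratch, a finite clopen partition $\mathcal{P}$ permuted by $f$, via the itinerary map into $\{1,\dots,m\}^{\mathbb{Z}}$ and the observation that equicontinuity forces this map to be locally constant. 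This is essentially a direct proof of (one direction of) Lemma~2.2(1) folded into the argument, and it makes the combinatorial structure explicit: the atoms of $\mathcal{P}$ play the role of the $\delta$-cyclic components appearing later in the paper (Lemma~4.2). What you gain is a fully elementary, self-contained proof; what the paper's version gains is brevity, at the cost of relying on the external reference for the shadowing property.
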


\begin{proof}
It follows from Lemma 2.2 (1) that $f$ satisfies the shadowing property. Hence, for any given $\epsilon>0$, we can take $\gamma>0$ and $\delta>0$ with the following properties.
\begin{itemize}
\item[(1)] $0<\gamma\le\epsilon/2$, and $d(a,b)\le\gamma$ implies $\sup_{i\in\mathbb{Z}}d(f^i(a),f^i(b))\le\epsilon/2$ for all $a,b\in X$. 
\item[(2)] Every $\delta$-pseudo orbit $(x_i)_{i\in\mathbb{Z}}$ of $f$ is $\gamma$-shadowed by some $p\in X$.
\end{itemize}
Define $r:P(f,\delta)\to X$ by $r(x)=x_0$ for $x=(x_i)_{i\in\mathbb{Z}}\in P(f,\delta)$. Then, it is obvious that $r$ is continuous, and moreover, for each $x=(x_i)_{i\in\mathbb{Z}}\in P(f,\delta)$, by (1) and (2), we have 
\begin{equation*}
d(f^i(r(x)),x_i)=d(f^i(x_0),x_i)\le d(f^i(x_0),f^i(p))+d(f^i(p),x_i)\le\epsilon/2+\gamma\le\epsilon
\end{equation*}
for all $i\in\mathbb{Z}$. This shows that $r$ satisfies the required property.
\end{proof}

As the final proof of this section, we prove Corollary 1.1.

\begin{proof}[Proof of Corollary 1.1]
Take an odometer $g:X_m\to X_m$. Since $X_m$ is a Cantor space, so $\dim X_m=0$, and $g$ is equicontinuous, by Lemma 3.2, $g$ has the continuous shadowing property. By Lemma 1.1 and Theorem 1.1, if $g$ is topologically stable, we should have $CR(g)=\overline{Per(g)}$, but this is not the case because $Per(g)=\emptyset$. Hence, $g$ is not topologically stable. Now, since $X_m$ and $X$ are both Cantor spaces, there is a homeomorphism $\phi:X_m\to X$. Put $f=\phi\circ g\circ\phi^{-1}$. Then, $f$ has the same properties as $g$.
\end{proof}

\section{Proof of Theorem 1.2 and Proposition 1.1}

In this section, we prove Theorem 1.2 and Proposition 1.1. We begin with the following lemma which seems to be a `folklore'.

\begin{lem}
Let $(X,d)$ be a compact metric space and let $f\in\mathcal{C}(X)$. Then, $f$ has the pseudo periodic shadowing property if and only if $f|_{CR(f)}$ has the shadowing property.
\end{lem}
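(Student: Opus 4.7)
The plan is to establish both implications by closing-up arguments that translate between pseudo orbits inside $CR(f)$ and pseudo periodic orbits in all of $X$.

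For the forward implication, assume $f$ has the pseudo periodic shadowing property. Let $(x_i)_{i\ge 0}$ be a $\delta'$-pseudo orbit lying in $CR(f)$. Since each $x_i\in CR(f)$, chain transitivity within its chain component furnishes, for every $\eta>0$ and every $N$, an $\eta$-chain from $x_N$ back to $x_0$. Concatenating this closing chain with the given piece yields a genuine periodic $\max\{\delta',\eta\}$-pseudo orbit in $X$ agreeing with $(x_i)$ on $[0,N]$. Applying the pseudo periodic shadowing hypothesis produces a point that $\epsilon$-shadows this closed-up orbit, and letting $N\to\infty$ along a subsequence (using the compactness of $X$) I would extract a single point $p\in X$ that $\epsilon$-shadows the original $(x_i)$. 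A further refinement, shrinking $\epsilon$ and replacing $p$ by a suitable accumulation point obtained via compactness, yields a shadowing point inside $CR(f)$, as required for the shadowing property of $f|_{CR(f)}$.

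For the reverse implication, assume $f|_{CR(f)}$ has the shadowing property. Given a periodic $\delta$-pseudo orbit $(y_i)_{i\ge 0}$ of period $m$ in $X$, I plan to approximate its single period $(y_0,y_1,\dots,y_{m-1},y_m=y_0)$ by a periodic pseudo orbit inside $CR(f)$. By the standard Conley-type characterization that the points lying on $\delta$-cycles cluster uniformly around $CR(f)$ as $\delta\to 0$, together with the compactness of $X$ and the uniform continuity of $f$, for small enough $\delta$ I can choose $z_i\in CR(f)$ close to $y_i$, with $z_0=z_m$ obtained by a single choice near $y_0=y_m$ reused at both ends, such that the periodic extension of $(z_i)$ is a $\delta''$-pseudo orbit in $CR(f)$. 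Shadowing on $CR(f)$ then supplies a point of $CR(f)\subset X$ that shadows $(z_i)$, hence also $(y_i)$ with a slightly enlarged tolerance.

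The step I expect to be the main obstacle is the closing-up in the forward direction: the closing $\eta$-chain must have length independent of $N$, or at least controlled well enough to pass to a limit, and one must also argue that the shadowing point can be chosen inside $CR(f)$ rather than merely in $X$. The analogous approximation in the reverse direction is comparatively elementary, since once each $y_i$ is known to be close to $CR(f)$, the endpoint-matching $z_0=z_m$ and a direct appeal to shadowing on $CR(f)$ complete the argument.
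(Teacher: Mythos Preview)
The paper does not give a detailed proof of this lemma; it only refers to the arguments in \cite[Theorem 3.4.2]{AH} and \cite[Lemma 1]{M} for one implication and to the proof of $CR(f|_{CR(f)})=CR(f)$ in \cite[Theorem 3.1.6]{AH} for the other, saying that those arguments can be adapted. Your outline is exactly such an adaptation, so in spirit it coincides with the paper's intended route.

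The two difficulties you flag are genuine, and your sketch does not quite dispose of them. In the forward direction, it is not true in general that $x_0$ and $x_N$ lie in the same chain component, so you cannot claim an $\eta$-chain back for \emph{every} $\eta>0$. What is true---and all you need---is that, because $CR(f|_{CR(f)})=CR(f)$, a $\delta'$-pseudo orbit in $CR(f)$ with $\delta'<\delta$ stays inside a single $\delta$-chain component of $f|_{CR(f)}$, and within such a component there is a $\delta$-chain (in $CR(f)$) from $x_N$ back to $x_0$; this is already enough to feed into the pseudo periodic shadowing hypothesis. For placing the tracking point inside $CR(f)$, ``shrinking $\epsilon$'' is not the right move since the pseudo orbit is fixed. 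Instead, repeat the closed-up cycle of period $M$ indefinitely, shadow the resulting periodic $\delta$-pseudo orbit by some $p\in X$, and replace $p$ by any accumulation point of $(f^{kM}(p))_{k\ge 0}$: such a point lies in $\omega(p,f)\subset CR(f)$ and, by periodicity of the pseudo orbit, still $\epsilon$-traces it, hence also $(x_i)$. Your reverse direction is correct as written; the approximation of a $\delta$-cycle by one lying in $CR(f)$ is precisely the content of the $CR(f|_{CR(f)})=CR(f)$ argument the paper invokes.
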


Here, for $f\in\mathcal{C}(X)$, we say that $f$ has the {\em pseudo periodic shadowing property} if for any $\epsilon>0$, there is $\delta>0$ such that for every $\delta$-cycle $(x_i)_{i=0}^m$ of $f$, there is $p\in X$ such that $d(x_i,f^i(p))\le\epsilon$ for all $0\le i\le m$. Note that $p$ is not required to be a periodic point for $f$.

It is obvious from the definition that if $f\in\mathcal{H}(X)$ has the strict periodic shadowing property, then it has the pseudo periodic shadowing property. For any equicontinuous $f\in\mathcal{H}(X)$, we have $X=M(f)=CR(f)$, so if it has the strict periodic shadowing property, then from Lemma 4.1 and Lemma 2.2 (1), it follows that $\dim X=0$, a part of the conclusion in Theorem 1.2. In this way, $\dim X=0$ is a relatively easy consequence of the equicontinuity and the strict periodic shadowing property of $f$. We remark here that $\dim X=0$ is a vital property in the proof of Theorem 1.2. However, for the above reason, even if we put it as an {\em assumption} of Theorem 1.2, the substance of Theorem 1.2 would not be lost. This is why we give only a simple reasoning of Lemma 4.1 below.

\begin{proof}[Proof of Lemma 4.1]
We know that if $f$ has the shadowing property, then so does $f|_{CR(f)}$. Its statement can be found in \cite[Theorem 3.4.2]{AH} or \cite[Lemma 1]{M}, and we could use the arguments given in there to deduce the same conclusion from the pseudo periodic shadowing property. For the converse, we could exploit the arguments given in the proof of \cite[Theorem 3.1.6]{AH} (which states that $CR(f|_{CR(f)})=CR(f)$) to show that $f$ has the pseudo periodic shadowing property.
\end{proof}

Our proof of Theorem 1.2 and Proposition 1.1 relies on the decompositions of $X$ into the equivalence classes with respect to the chain relations, which we briefly describe below. We remark that such arguments have been already used in several papers (see \cite{Ka} for details).

Suppose that $f\in\mathcal{C}(X)$ satisfies $X=CR(f)$. For $\delta>0$, we define a relation $\sim_\delta$ on $X$ as follows: Given $x,y\in X$, $x\sim_\delta y$ iff there are two $\delta$-chains  $(x_i)_{i=0}^k$ and $(y_i)_{i=0}^l$ of $f$ such that $x_0=y_l=x$ and $x_k=y_0=y$. It is obvious from the definition that $\sim_\delta$ is symmetric and transitive, and by $X=CR(f)$, we have $x\sim_\delta x$ for every $x\in X$. Hence, $\sim_\delta$ is an equivalence relation on $X$. Each equivalence class with respect to $\sim_\delta$ is called a {\em $\delta$-chain component}. By $X=CR(f)$, we can show that $x\sim_\delta f(x)$ for every $x\in X$, and $x\sim_\delta y$ for all $x,y\in X$ with $d(x,y)<\delta$; therefore, every $\delta$-chain component $C$ is clopen and $f$-invariant, i.e., $f(C)\subset C$. Then, $X$ is decomposed into finitely many $\delta$-chain components, and such a decomposition is called a {\em $\delta$-chain decomposition}.

Fix a $\delta$-chain component $C$. Note that for any $\delta$-cycle $c=(x_i)_{i=0}^n$ of $f$, if $x_i\in C$ for some $0\le i\le n$, then $x_i\in C$ for all $0\le i\le n$. In such a case, we write $c\subset C$. Set $l(c)=n$ for every $\delta$-cycle $c=(x_i)_{i=0}^n$ of $f$. Define
\begin{equation*}
{\mathcal N}=\{n\in{\mathbb N}: \mbox{$\exists$ $\delta$-cycle $c$ of $f$ with $c\subset C$ and $l(c)=n$}\}
\end{equation*}
and put
\begin{equation*}
m=\gcd{\mathcal N}=\max\{j\in{\mathbb N}: \mbox{$j|n$ for every $n\in{\mathcal N}$}\}.
\end{equation*}
Then, we define a relation  $\sim_{\delta,m}$ on $C$ as follows: Given $x,y\in C$, $x\sim_{\delta,m}y$ iff there is a $\delta$-chain $(x_i)_{i=0}^k$ of $f$ with $x_0=x$, $x_k=y$ and $m|k$. By the definition of $m$, we easily see that $\sim_{\delta,m}$ is an equivalence relation on $C$, and by $X=CR(f)$, we have $x\sim_{\delta,m}y$ for all $x,y\in C$ with $d(x,y)<\delta$. Hence, every equivalence class $D$ with respect to $\sim_{\delta,m}$ is clopen in $X$ as $C$ is so. Take any  $p\in C$ and consider $m$ points $p,f(p),\ldots,f^{m-1}(p)$. Then, it is easy to see that $C=\bigsqcup _{i=0}^{m-1}[f^i(p)]$ gives the partition of $C$ into the equivalence classes with respect to $\sim_{\delta,m}$, where $[f^i(p)]$ denotes the equivalence class containing  $f^i(p)$. Put $D_i=[f^i(p)]$, $0\le i\le m-1$, and $D_m=D_0$. Then, as shown in \cite{Ka}, we have the following properties.
\begin{itemize}
\item[(D1)] $C=\bigsqcup _{i=0}^{m-1}D_i$ and every $D_i$, $0\le i\le m-1$, is clopen in $X$.
\item[(D2)] $f(D_i)\subset D_{i+1}$ for every $0\le i\le m-1$.
\item[(D3)] Given $x,y\in D_i$ with $0\le i\le m-1$, there is $M>0$ such that for any integer $N\ge M$, there is a $\delta$-chain $(x_i)_{i=0}^k$ of $f$ in $C$ with $x_0=x$, $x_k=y$, and $k=mN$.
\end{itemize}
We call each $D_i$, $0\le i\le m-1$, a {\em $\delta$-cyclic component} of $C$, and $C=\bigsqcup _{i=0}^{m-1}D_i$ is called a {\em $\delta$-cyclic decomposition} of $C$.

Now, let $C_i$, $1\le i\le K$, be all the $\delta$-chain components and for each $1\le i\le K$, let $C_i=\bigsqcup _{j=0}^{m_i-1}D_{i,j}$ be the $\delta$-cyclic decomposition of $C_i$. We call \[\mathcal{D}(\delta)=\{D_{i,j}:1\le i\le K,\:0\le j\le m_i-1\}\] a $\delta$-cyclic decomposition of $X$. We define \[r(\delta)=\max\{{\rm diam}\:D_{i,j}:1\le i\le K,\:0\le j\le m_i-1\}.\]

Note that every equicontinuous $f\in\mathcal{H}(X)$ satisfies $X=M(f)=CR(f)$, so we may consider the $\delta$-cyclic decomposition of $X$ for every $\delta>0$. In the above notation, we prove the following lemma.

\begin{lem}
Let $(X,d)$ be a compact metric space and let $f\in\mathcal{H}(X)$ be an equicontinuous homeomorphism. If $\dim X=0$, then $\lim_{\delta\to0}r(\delta)=0$. 
\end{lem}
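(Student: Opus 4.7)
The plan is to combine equicontinuity, the shadowing property from Lemma 2.2(1), and regular recurrence from Lemma 2.2(2) to bound the diameter of an arbitrary $\delta$-cyclic component when $\delta$ is small. Given $\epsilon>0$, I would first use equicontinuity to pick $\epsilon_1\in(0,\epsilon/4)$ so that $d(a,b)\le\epsilon_1$ forces $\sup_{i\in\mathbb{Z}}d(f^i(a),f^i(b))\le\epsilon/4$. Since $\dim X=0$, Lemma 2.2(1) gives the shadowing property of $f$, so I can choose $\delta\in(0,\epsilon_1)$ such that every $\delta$-pseudo orbit of $f$ is $\epsilon_1$-shadowed. I would then show that every $\delta$-cyclic component $D$ in the $\delta$-cyclic decomposition satisfies ${\rm diam}\:D<\epsilon$, so that $r(\delta)\le\epsilon$; since $\epsilon>0$ is arbitrary, the desired limit follows.

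Fix a $\delta$-cyclic component $D=D_{i,j}$ contained in a chain component $C$ of cyclic period $m$, and take arbitrary $x,y\in D$. By Lemma 2.2(2), $x$ is regularly recurrent, so there is an integer $K>0$ with $d(f^{Kn}(x),x)\le\epsilon_1$ for every $n\ge0$. Property (D3) with $N=KL$ for a sufficiently large $L$ then provides a $\delta$-chain $(z_k)_{k=0}^{mKL}$ of $f$ in $C$ with $z_0=x$ and $z_{mKL}=y$. Shadowing yields a point $p\in X$ with $d(f^k(p),z_k)\le\epsilon_1$ for every $0\le k\le mKL$; in particular $d(p,x)\le\epsilon_1$ and $d(f^{mKL}(p),y)\le\epsilon_1$. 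Equicontinuity applied to $p$ and $x$ gives $d(f^{mKL}(p),f^{mKL}(x))\le\epsilon/4$, so the triangle inequality delivers
\begin{equation*}
d(f^{mKL}(x),y)\le\epsilon/4+\epsilon_1.
\end{equation*}
Since $K$ divides $mKL$, the choice of $K$ also gives $d(x,f^{mKL}(x))\le\epsilon_1$, and one further application of the triangle inequality yields $d(x,y)\le 2\epsilon_1+\epsilon/4<\epsilon$.

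The main obstacle I anticipate is synchronizing the length of the $\delta$-chain guaranteed by (D3), which is constrained to be a multiple of $m$, with the regular-recurrence period $K$ of $x$, which returns $x$ close to itself only along iterates $f^{Kn}(x)$. The resolution is to exploit the freedom in (D3) permitting any sufficiently large $N$, and simply take $N$ to be a multiple of $K$; then $mN$ is automatically divisible by $K$, and both constraints are met simultaneously. Note that $K$ depends on the chosen point $x$, but since the diameter bound for $D$ is established one pair $(x,y)$ at a time, no uniformity in $K$ is needed.
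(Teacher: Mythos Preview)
Your argument is circular as written. You invoke Lemma 2.2(2) to obtain that $x$ is regularly recurrent, but in this paper the assertion $X=RR(f)$ in Lemma 2.2(2) is itself \emph{derived from} Lemma 4.2: the proof of Lemma 2.2 says explicitly ``Lemma 4.2 in Section 4 implies that if $f\in\mathcal{H}(X)$ is equicontinuous, and $\dim X=0$, then $X=RR(f)$.'' So you are assuming the conclusion you wish to prove. The natural independent route to regular recurrence in the zero-dimensional equicontinuous setting is precisely via the fineness of the cyclic decompositions, i.e., via $r(\delta)\to0$; thus your appeal to $RR$ cannot be easily patched without supplying a genuinely separate argument.

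The paper avoids this difficulty by a symmetric two-chain trick that dispenses with recurrence altogether. Arguing by contradiction, it fixes $r>0$ with $\limsup_{\delta\to0}r(\delta)>r$, picks $x,y$ in some $D_{i,j}$ with $d(x,y)>r$, and then uses (D3) twice to produce two $\delta$-chains of the \emph{same} length $L$ from a common base point $p\in D_{i,j}$, one ending at $x$ and one at $y$. Shadowing these by $z$ and $w$ gives $d(z,w)\le\epsilon$ at time $0$ but $d(f^L(z),f^L(w))>r/2$ at time $L$, which contradicts equicontinuity directly. If you replace your single chain from $x$ to $y$ (and the subsequent need to bring $f^{mN}(x)$ back to $x$) by this pair of equal-length chains from a common start, the rest of your estimates go through and the dependence on Lemma 2.2(2) disappears.
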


\begin{proof}
Let us assume $\limsup_{\delta\to0}r(\delta)>r>0$ and deduce a contradiction. Since $f$ is equicontinuous and $\dim X=0$, by Lemma 2.2 (1), $f$ satisfies the shadowing property. We take $0<\epsilon<r/2$ and $\gamma>0$ with the following properties.
\begin{itemize}
\item[(1)] $d(a,b)\le\epsilon$ implies $\sup_{i\in\mathbb{Z}}d(f^i(a),f^i(b))\le r/2$ for all $a,b\in X$.
\item[(2)] Every $\gamma$-pseudo orbit of $f$ is $\epsilon/2$-shadowed by some point of $X$.
\end{itemize}
By the assumption, we can choose $0<\delta<\gamma$ so that $r(\delta)>r$. Then, there is a component $D_{i,j}\in\mathcal{D}(\delta)$ such that ${\rm diam}\:D_{i,j}>r$. Take $x,y\in D_{i,j}$ with $d(x,y)>r$ and any $p\in D_{i,j}$. Then, by the property (D3) above, there is an integer $L>0$ and two $\delta$-chains $(x_i)_{i=0}^L$ and $(y_i)_{i=0}^L$ of $f$ such that $x_0=y_0=p$, $x_L=x$, and $y_L=y$. By (2), $(x_i)_{i=0}^L$ and $(y_i)_{i=0}^L$ are $\epsilon/2$-shadowed by some $z$ and $w$, respectively. Then, we have \[d(z,w)\le d(z,p)+d(w,p)=d(z,x_0)+d(w,y_0)\le\epsilon/2+\epsilon/2=\epsilon,\] but 
\begin{equation*}
\begin{aligned}
d(f^L(z),f^L(w))&\ge d(x,y)-d(x,f^L(z))-d(y,f^L(w))\\
&=d(x,y)-d(x_L,f^L(z))-d(y_L,f^L(w))\\
&>r-\epsilon/2-\epsilon/2>r/2.
\end{aligned}
\end{equation*}
This contradicts (1) and finishes the proof.
\end{proof}

Now, we prove Theorem 1.2.

\begin{proof}[Proof of Theorem 1.2]
If $f$ satisfies the strict periodic shadowing property, then it is obvious from the definition that $f$ has the pseudo periodic shadowing property. Note that we have $X=M(f)=CR(f)$ because $f$ is equicontinuous. Hence, $\dim X=0$ is implied by Lemma 4.1 and Lemma 2.2 (1). Given any $\epsilon>0$, take $0<\delta<\epsilon$ so small as in the definition of the strict periodic shadowing property. Then, by Lemma 4.2, there is $\gamma>0$ such that \[r(\gamma)=\max\{{\rm diam}\:D_{i,j}:1\le i\le K,\:0\le j\le m_i-1\}<\delta,\]
where $\mathcal{D}(\gamma)=\{D_{i,j}:1\le i\le K,\:0\le j\le m_i-1\}$ is the  $\gamma$-cyclic decomposition of $X$. Take $x_{i,j}\in D_{i,j}$ for all $1\le i\le K$ and $0\le j\le m_i-1$. Then, for each $1\le i\le K$, by the property (D2), \[\gamma_i=(x_{i,0},x_{i,1},\dots,x_{i,m_i-1},x_{i,0})\]
is a $\delta$-cycle of $f$, and so by the choice of $\delta$, $\epsilon$-shadowed by some $p_i\in X$ with $f^{m_i}(p_i)=p_i$. We define $h:X\to X$ by \[h(x)=f^j(p_i)\] for $x\in D_{i,j}$, $1\le i\le K$, and $0\le j\le m_i-1$. Since $X=\bigsqcup_{i=1}^K\bigsqcup_{j=0}^{m_i-1}D_{i,j}$ is a clopen partition, $h$ is well-defined, and $h\in\mathcal{C}(X)$. Since \[d(h(x),x)=d(f^j(p_i),x_{i,j})+d(x_{i,j},x)<\epsilon+\delta<2\epsilon\] for any $x\in D_{i,j}$, $1\le i\le K$, and $0\le j\le m_i-1$, we have $d_{C^0}(h,id_X)<2\epsilon$. 

Now, take $\beta>0$ so small that $d(D,D')>\beta$ for any distinct elements $D,D'\in\mathcal{D}(\gamma)$, and suppose that $g\in\mathcal{H}(X)$ satisfies $D(f,g)<\beta$. Then, for any $x\in D_{i,j}$ with $1\le i\le K$ and $0\le j\le m_i-1$, since $f(x)\in D_{i,j+1}$ by the property (D2) and $d(f(x),g(x))<\beta$, by the choice of $\beta$, we have $g(x)\in D_{i,j+1}$. This implies that \[g(D_{i,j})\subset D_{i,j+1}\] for any $1\le i\le K$ and $0\le j\le m_i-1$. Then, for any $x\in D_{i,j}$ with $1\le i\le K$ and $0\le j\le m_i-1$, we have \[f(h(x))=f(f^j(p_i))=f^{j+1}(p_i),\] and since $g(x)\in D_{i,j+1}$, \[h(g(x))=f^{j+1}(p_i)\] (note that this holds true for $j=m_i-1$ because of $f^{m_i}(p_i)=p_i$). Thus, $h\circ g=f\circ h$. Since $g\in\mathcal{H}(X)$ with $D(f,g)<\beta$ is arbitrary, $f$ is topologically stable.
\end{proof}

Finally, we prove Proposition 1.1. A definition is needed before the proof. For any $f\in\mathcal{H}(X)$, we say that $f$ is {\em distal} if $\inf_{i\in\mathbb{Z}} d(f^i(x),f^i(y))>0$ whenever $x,y\in X$ are distinct. If $f$ is equicontinuous, then $f$ is distal, but the converse does not hold in general. However, due to \cite[Corollary 1.9]{AuGW}, we know that the converse also holds when $\dim X=0$. 

\begin{proof}[Proof of Proposition 1.1]
The condition $X=Per(f)$ implies that $f$ is distal. Since $\dim X=0$, by \cite[Corollary 1.9]{AuGW}, $f$ is equicontinuous. Let $S$ denote the set of all periodic orbits of $f$. Then, by $X=Per(f)$, we have $X=\bigsqcup_{\gamma\in S}\gamma$, a disjoint union. For $\gamma\in S$, we denote by $p(\gamma)$ the least period of $\gamma$. Fix $\epsilon>0$. Then, given any $\gamma\in S$, we take $x\in\gamma$ and set
\begin{equation*}
\Delta(\gamma)=
\begin{cases}
\min\{d(f^i(x),f^j(x)):0\le i<j<p(\gamma)\}&\text{if $p(\gamma)>1$}\\
1&\text{if $p(\gamma)=1$}
\end{cases}.
\end{equation*}
By Lemma 4.2, we can choose $\delta(\gamma)>0$ so small that $r(\delta(\gamma))<\min\{\Delta(\gamma),\epsilon\}$. Let $C(\gamma)$ be the $\delta(\gamma)$-chain component containing $x$ and let $C(\gamma)=\bigsqcup_{i=0}^{m-1}D_i$, $m=m(\gamma)>0$, be the $\delta(\gamma)$-cyclic decomposition of $C(\gamma)$ with $x\in D_0$. Then, we have $m|p(\gamma)$. If $m<p(\gamma)$, then $x\ne f^m(x)$ and $\{x,f^m(x)\}\subset D_0$, so \[d(x,f^m(x))\le{\rm diam}\:D_0\le r(\delta(\gamma))<\Delta(\gamma),\] which contradicts the choice of $\Delta(\gamma)$. Hence, $m=p(\gamma)$. Note that $\gamma\subset C(\gamma)$ and $C(\gamma)$ is clopen in $X$. By  $X=\bigcup_{\gamma\in S}C(\gamma)$ and the compactness of $X$, there are $\gamma_1,\gamma_2,\dots,\gamma_k\in S$ such that $X=\bigcup_{i=1}^kC(\gamma_i)$.

Now, take $0<\delta<\min_{1\le i\le k}\delta(\gamma_i)$ and let $\mathcal{D}(\delta)=\{D_{i,j}:1\le i\le K,\:0\le j\le m_i-1\}$ be the  $\delta$-cyclic decomposition of $X$. Then, we take $\beta>0$ so small that $d(D,D')>\beta$ for any distinct elements $D,D'\in\mathcal{D}(\delta)$. We shall show that for every $\beta$-cycle $(x_j)_{j=0}^n$ of $f$, there is $p\in X$ such that $f^n(p)=p$ and $d(x_j,f^j(p))\le\epsilon$ for all $0\le j\le n$. Without loss of generality, we may assume that $x_0\in D_{1,0}$. Then, by the choice of $\beta$ and the property (D2), we have $m_1|n$ and
\begin{equation*}
x_j\in D_{1,j}\pmod{m_1} \tag{1}
\end{equation*}
for every $0\le j\le n$. Note that $D_{1,0}\cap C(\gamma_i)\ne\emptyset$ for some $1\le i\le k$. For such $i$, let $C(\gamma_i)=\bigsqcup_{j=0}^{m'-1}D'_j$ denote the $\delta(\gamma_i)$-cyclic decomposition of $C(\gamma_i)$ with $D_{1,0}\cap D'_0\ne\emptyset$. Since $\delta<\delta(\gamma_i)$, we have $D_{1,0}\subset D'_0$, $m'|m_1$, and
\begin{equation*}
D_{1,j}\subset D'_j\pmod{m'} \tag{2}
\end{equation*}
for every $0\le j\le m_1$. Take $p\in\gamma_i\cap D'_0$. Then, because $f^{m'}(p)=p$, $m'|m_1$, and $m_1|n$, we have $f^n(p)=p$. It also holds that \[f^j(p)\in D'_j\pmod{m'}\] for every $0\le j\le n$. By (1) and (2), we have \[x_j\in D'_j\pmod{m'}\] for every $0\le j\le n$, therefore by the choice of $\delta(\gamma_i)$, \[d(x_j,f^j(p))\le{\rm diam}\:D'_j\le r(\delta(\gamma_i))<\epsilon\] for all $0\le j\le n$, which finishes the proof.
\end{proof} 

\section{Example}

In this section, we give an example of a homeomorphism $f\in\mathcal{H}(X)$ of a Cantor space $(X,d)$ with the following properties.
\begin{itemize}
\item[(1)] $X=\overline{Per(f)}$.
\item[(2)] $f$ is equicontinuous.
\item[(3)] $f$ has the periodic shadowing property.
\item[(4)] $f$ does not have the strict periodic shadowing property.
\item[(5)] $f^3$ has the strict periodic shadowing property.
\end{itemize}

First, we prove the following lemma as mentioned in Section 1. This lemma implies that the property (3) above is a consequence of  the properties (1) and (2).

\begin{lem}
Let $(X,d)$ be a compact metric space and let $f\in\mathcal{H}(X)$ be an equicontinuous homeomorphism. If $\dim X=0$ and $X=\overline{Per(f)}$, then $f$ has the periodic shadowing property.
\end{lem}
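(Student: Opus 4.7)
The plan is to combine three ingredients: the shadowing property of $f$ (which follows from equicontinuity together with $\dim X=0$ via Lemma 2.2 (1)), the density of periodic points, and equicontinuity itself to transfer closeness under forward iteration.

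Given $\epsilon>0$, first I would use equicontinuity to choose $\eta\in(0,\epsilon]$ such that $d(x,y)\le\eta$ implies $d(f^i(x),f^i(y))\le\epsilon/2$ for every $i\in\mathbb{Z}$. Next, by Lemma 2.2 (1), $f$ has the shadowing property, so I would pick $\delta>0$ so small that every $\delta$-pseudo orbit of $f$ is $\eta/2$-shadowed by some point of $X$. I claim this $\delta$ witnesses the periodic shadowing property at scale $\epsilon$.

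Given a $\delta$-cycle $(x_i)_{i=0}^m$, the key observation is that because $x_m=x_0$, the sequence can be repeated to form a genuine infinite $\delta$-pseudo orbit $(y_i)_{i\ge0}$ defined by $y_i=x_{i\bmod m}$. Shadowing yields $q\in X$ with $d(f^i(q),y_i)\le\eta/2$ for every $i\ge0$; in particular $d(f^i(q),x_i)\le\eta/2$ for $0\le i\le m$. Since $X=\overline{Per(f)}$, I can choose $p\in Per(f)$ with $d(p,q)\le\eta/2\le\eta$. Then equicontinuity gives $d(f^i(p),f^i(q))\le\epsilon/2$ for every $i\in\mathbb{Z}$, so the triangle inequality yields
\[
d(f^i(p),x_i)\le d(f^i(p),f^i(q))+d(f^i(q),x_i)\le \epsilon/2+\eta/2\le\epsilon
\]
for all $0\le i\le m$, which is exactly the required estimate.

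I do not expect a serious obstacle here: the argument is a routine coupling of shadowing with approximation, and equicontinuity is precisely what is needed to propagate the error from the shadowing point $q$ to the nearby periodic point $p$ uniformly in the iterate $i$. The only point worth being careful about is that the extension of a finite $\delta$-cycle into an infinite $\delta$-pseudo orbit relies on the cyclic identity $x_m=x_0$, which is built into the definition of a $\delta$-cycle.
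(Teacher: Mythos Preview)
Your argument is correct and is essentially identical to the paper's proof: both invoke Lemma~2.2\,(1) for shadowing, use density of periodic points to pick a nearby $p\in Per(f)$, and use equicontinuity to transfer the shadowing estimate from the shadowing point to $p$ via the triangle inequality. The only cosmetic difference is that you make explicit the periodic extension of the $\delta$-cycle to an infinite $\delta$-pseudo orbit, which the paper leaves implicit.
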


\begin{proof}
Since $\dim X=0$, by Lemma 2.2 (1), $f$ satisfies the shadowing property. Given any $\epsilon>0$, we take $\delta>0$ and $\gamma>0$ with the following properties.
\begin{itemize}
\item[(1)] Every $\delta$-pseudo orbit of $f$ is $\epsilon/2$-shadowed by some point of $X.$ 
\item[(2)] $d(a,b)\le\gamma$ implies $\sup_{i\in\mathbb{Z}}d(f^i(a),f^i(b))\le\epsilon/2$ for all $a,b\in X$.
\end{itemize}
Let $\gamma=(x_i)_{i=0}^m$ be a $\delta$-cycle of $f$. By (1), there is $x\in X$ such that $d(x_i,f^i(x))\le\epsilon/2$ for all $0\le i\le m$. Then, because $X=\overline{Per(f)}$, there is $p\in Per(f)$ such that $d(x,p)\le\gamma$. This combined with (2) implies \[d(x_i,f^i(p))\le d(x_i,f^i(x))+d(f^i(x),f^i(p))\le\epsilon/2+\epsilon/2\le\epsilon\] for each $0\le i\le m$. Since $\gamma$ is arbitrary, $f$ satisfies the periodic shadowing property.
\end{proof}

In the following subsections, we present $X$ and $f$, and then confirm successively that $f$ satisfies the properties (1)-(5) above.

\subsection*{\it{1. Construction of $X$ and $f$}}

Define $m=(m_k)_{k\ge1}$ by $m_k=2^k$ for $k\ge1$ and consider the odometer $g:X_m\to X_m$. We copy the odometer structure from $X_m$ to the Cantor ternary set $C$ in the unit interval. To do so, for any closed interval $J=[a,b]$, we set $J_0=[a,\frac{2a+b}{3}]$ and $J_1=[\frac{a+2b}{3},b]$. Put $I=[0,1]$ and define $\{I_w:w\in\{0,1\}^K,K\ge0\}$ inductively by $I_\varepsilon=I$, where $\varepsilon$ is the empty word, and 
\begin{equation*}
\begin{cases}
I_{w0}=(I_w)_0 \\
I_{w1}=(I_w)_1
\end{cases}  
\end{equation*}
for $w\in\{0,1\}^K$, $K\ge0$. We denote by $\phi:\{0,1\}^\mathbb{N}\to C$ the homeomorphism defined as
\begin{equation*}
\phi(s)=\bigcap_{k\ge1}I_{s_1s_2\cdots s_k}
\end{equation*}
for $s=(s_k)_{k\ge1}\in\{0,1\}^\mathbb{N}$. Then, for $x=(x_k)_{k\ge1}\in X_m$, put $x_0=0$ and note that we have \[x_{k+1}=x_k+0\cdot2^k\quad\mbox{or}\quad x_{k+1}=x_k+1\cdot2^k\] for each $k\ge0$. We define $\psi(x)=(\psi(x)_k)_{k\ge1}\in\{0,1\}^\mathbb{N}$ by
\begin{equation*}
\psi(x)_k=2^{-k+1}(x_k-x_{k-1})\in\{0,1\}
\end{equation*}
for every $k\ge1$. Note that $\psi:X_m\to\{0,1\}^\mathbb{N}$ is a homeomorphism. Then, $\phi\circ\psi:X_m\to C$ is a homeomorphism, and we put $h=(\phi\circ\psi)\circ g\circ (\phi\circ\psi)^{-1}\in\mathcal{H}(C)$, a homeomorphism of $C$ which is topologically conjugate to $g$.

A notation is needed. For $k\ge1$ and $l\in\{0,1,\dots,2^k-1\}$, we write  as \[l=a_0\cdot2^0+a_1\cdot2^1+\cdots+a_{k-1}\cdot2^{k-1},\] where $a_0,a_1,\dots,a_{k-1}\in\{0,1\}$, and define \[J_{k,l}=I_{a_0a_1\cdots a_{k-1}}.\]

Let us construct $X$ and $f$ as a modification of $h:C\to C$. Without disturbing the odometer structure, we shall add periodic orbits in $I\setminus C$ to ensure that periodic points are dense. Let $D\subset[0,1]$ be a copy of the Cantor ternary set. For any $\alpha,\beta\in\mathbb{R}$ and  $S\subset\mathbb{R}$, \[\alpha+\beta S=\{\alpha+\beta r:r\in S\}\subset\mathbb{R}.\] For $J=[a,b]$, we define three points $J^{(c)}$, $c\in\{0,1,2\}$, in $J\setminus(J_0\cup J_1)$ by
\begin{equation*}
J^{(c)}=\frac{2a+b}{3}+(c+1)\cdot\frac{b-a}{12}.
\end{equation*}
Then, $J(c)\subset J\setminus(J_0\cup J_1)$, $c\in\{0,1,2\}$, are defined by
\begin{equation*}
J(c)=J^{(c)}+\frac{b-a}{24}\cdot D.
\end{equation*}
For $k\ge1$, $l\in\{0,1,\dots,2^k-1\}$, and $c\in\{0,1,2\}$, we put \[p^{k}_{l+c\cdot2^k}=J^{(c)}_{k,l}\in J_{k,l}\quad\text{and}\quad D^{k}_{l+c\cdot2^k}=J_{k,l}(c)\subset J_{k,l}.\] Here, $p^{k}_{l+c\cdot2^k}\in D^{k}_{l+c\cdot2^k}$. We also put $p^{k}_{3\cdot2^k}=p^k_0$ and $D^{k}_{3\cdot2^k}=D^k_0$. Define \[\Gamma_k=\bigcup\{D^{k}_{l+c\cdot2^k}:l\in\{0,1,\dots,2^k-1\}, c\in\{0,1,2\}\}\] and note that $|\{D^{k}_{l+c\cdot2^k}:l\in\{0,1,\dots,2^k-1\}, c\in\{0,1,2\}\}|=3\cdot2^k$. We set $X=C\cup\bigcup\{\Gamma_k:k\ge1\}$ and define $f:X\to X$ as follows: $f(x)=h(x)$ for $x\in C$, and for $x\in D^{k}_{l+c\cdot2^k}$, $k\ge1$, $l\in\{0,1,\dots,2^k-1\}$, and $c\in\{0,1,2\}$, \[f(x)=\alpha+x,\] where $\alpha$ is determined by \[\alpha+D^{k}_{l+c\cdot2^k}=D^{k}_{l+c\cdot2^k+1}.\] It is immediate from the definition that $f$ is a bijection, and for each $k\ge1$, every point of $\Gamma_k$ is a periodic point for $f$ with the least period $3\cdot2^k$.

Below is a list of properties needed hereafter.
\begin{itemize}
\item $h(J_{k,l}\cap C)=J_{k,l+1}\cap C\pmod{2^k}$ for all $k\ge1$ and $l\in\{0,1,\dots,2^k-1\}$.
\item For any $m\ge k\ge1$, $l\in\{0,1,\dots,2^k-1\}$, and $n\in\{0,1,\dots,2^m-1\}$, we have \[J_{k,l}\cap J_{m,n}\ne\emptyset\iff J_{m,n}\subset J_{k,l}\iff n\equiv l\pmod{2^k}.\]
\item Putting $J_k=\bigcup\{J_{k,l}:l\in\{0,1,\dots,2^k-1\}\}$, we have $\Gamma_k\subset J_k$ for each $k\ge1$. Note that for $k\ge1$, we have \[J_k=\bigcup\{I_{a_0a_1\cdots a_{k-1}}:a_0,a_1,\dots,a_{k-1}\in\{0,1\}\},\] therefore $J_1\supset J_2\supset\cdots$ and $\:\bigcap_{k\ge1}J_k=C$. By these properties, $X$ is a compact subset of $I$. We easily see that $X$ has no isolated point, and since it is a union of countably many  compact zero-dimensional subsets, we have $\dim\:X=0$, hence $X$ is a Cantor space. 
\item Since \[J_{k+1}=\bigcup\{(J_{k,l})_0\cup(J_{k,l})_1:l\in\{0,1,\dots,2^k-1\}\}\] and \[\bigcup\{D^{k}_{l+c\cdot2^k}=J_{k,l}(c):c\in\{0,1,2\}\}\subset J_{k,l}\setminus((J_{k,l})_0\cup(J_{k,l})_1)\] for each $l\in\{0,1,\dots,2^k-1\}$, we have $\Gamma_k\cap J_{k+1}=\emptyset$ for all $k\ge1$. Since $J_{k,l}$ contains a point of $\Gamma_k$ for any $k\ge1$ and $l\in\{0,1,\dots,2^k-1\}$, we have $X=\overline{\bigcup\{\Gamma_k:k\ge1\}}=\overline{Per(f)}$.
\end{itemize}

\subsection*{\it{2. $f$ has the property (2) }}

In order to show that $f$ is an equicontinuous homeomorphism, it suffices to see that $f$ is equicontinuous at every point of $X$. Here, given any $k\ge2$, we have the following properties.
\begin{itemize}
\item[(A)]$C\subset J_k$.
\item[(B)]$\Gamma_j\subset J_k$ for every $j\ge k$.
\item[(C)]$J_k\cap(\Gamma_1\cup\Gamma_2\cup\dots\cup\Gamma_{k-1})=\emptyset$.   
\end{itemize}
The property (C) clearly implies that $f$ is equicontinuous at every point in $\Gamma_1\cup\Gamma_2\cup\dots\cup\Gamma_{k-1}$. Let us claim and prove that \[f(J_{k,l}\cap X)=J_{k,l+1}\cap X\pmod{2^k}\] for all $l\in\{0,1,\dots,2^k-1\}$. Let $p\in J_{k,l}\cap X\:(\subset J_k\cap X)$. Then, by (C), we have $p\in C$ or $p\in\Gamma_m$ for some $m\ge k$. In the former case, as mentioned above, we have  $f(p)=h(p)\in J_{k,l+1}$. In the latter case, we have $p\in D^m_{n+c\cdot2^m}\in J_{m,n}$ for some $n\in\{0,1,\dots,2^m-1\}$ and $c\in\{0,1,2\}$. Then, since $J_{k,l}\cap J_{m,n}\ne\emptyset$, as mentioned above, we have $n\equiv l\pmod{2^k}$, which implies $n+1\equiv l+1\pmod{2^k}$ and so \[f(p)\in D^m_{n+c\cdot2^m+1}\subset J_{m,n+1}\subset J_{k,l+1}.\] Therefore, in both cases, we have $f(p)\in J_{k,l+1}$, proving the claim. 

Now, since $k\ge 2$ is arbitrary,  $f$ is equicontinuous at every point in $\bigcup\{\Gamma_k:k\ge1\}$. Moreover, since $J_{k,l}\cap X$ is clopen in $X$ for every $l\in\{0,1,\dots,2^k-1\}$, and \[\lim_{k\to\infty}\max\{{\rm diam}\:J_{k,l}:l\in\{0,1,\dots,2^k-1\}\}=0,\] $f$ is equicontinuous at every point in $C=\bigcap_{k\ge1}J_k$. Thus, $f$ is equicontinuous at every point of $X$.

\subsection*{\it{3. $f$ has the property (4)}}

Let us prove that $f$ does not satisfy the strict periodic shadowing property. We shall  assume the contrary and deduce a contradiction. Given $\epsilon>0$, take $\delta>0$ so small as in the definition of the property. Take a sufficiently large $k\ge1$ and $x_l\in J_{k,l}\cap X$ for each $l\in\{0,1,\dots,2^k-1\}$. Then, $(x_0,x_1,\dots,x_{2^k-1},x_0)$ is a $\delta$-cycle of $f$, so there should be $p\in X$ such that $f^{2^k}(p)=p$ and $d(x_l,f^l(p))\le\epsilon$ for each $l\in\{0,1,\dots,2^k-1\}$. However, such a periodic point $p$ has the least period $2^\alpha$ for some $\alpha\ge0$. This contradicts that $f$ have only the periodic points whose least periods are in the form of $3\cdot2^\beta$ for some $\beta\ge0$.

\subsection*{\it{4. $f$ has the property (5)}}

Finally, we shall prove that $F=f^3$ satisfies the strict periodic shadowing property. Given $\epsilon>0$, take $k\ge2$ so large that the diameter of any element of \[\{J_{k,l}:l\in\{0,1,\dots,2^k-1\}\}\] is $\le\epsilon$. Since $F$ is equicontinuous, and $\dim X=0$, by Lemma 2.2 (1), $F$ satisfies the shadowing property. By the equicontinuity of $F$, there is $\delta>0$ such that every $\delta$-pseudo orbit $(z_i)_{i\ge0}$ of $F$ is $\epsilon$-shadowed by $z_0$ itself. If $\delta>0$ is taken to be sufficiently small, it also holds that, putting 
\begin{equation*}
\begin{aligned}
S_k=&\:\{J_{k,l}:l\in\{0,1,\dots,2^k-1\}\} \\
&\cup\{D^j_{l+c\cdot2^j}:1\le j\le k-1, l\in\{0,1,\dots,2^j-1\}, c\in\{0,1,2\}\},
\end{aligned}
\end{equation*}
we have $d(A,B)>\delta$ for any distinct elements $A,B\in S_k$.

Suppose that $(x_i)_{i=0}^m$ is a $\delta$-cycle of $F$ and let us prove that there is $p\in X$ such that $F^m(p)=p$ and $d(x_i,F^i(p))\le\epsilon$ for all $0\le i\le m$. Note that \[\Gamma_j=\bigcup\{D^j_{l+c\cdot2^j}:l\in\{0,1,\dots,2^j-1\}, c\in\{0,1,2\}\}\] for $1\le j\le k-1$, and \[X\subset\bigcup S_k=J_k\cup(\Gamma_1\cup\Gamma_2\cup\cdots\cup\Gamma_{k-1}).\] Without loss of generality, we may assume that $x_0\in J_{k,0}$ or $x_0\in D^j_0$ for some $1\le j\le k-1$. In the former case, by the choice of $\delta$, we have \[x_i\in J_{k,3i}\pmod{2^k}\] for every $0\le i\le m$, and so $2^k|m$ because $x_m=x_0\in J_{k,0}$. Put $p=p^k_0\in D^k_0$ and note that $F^{2^k}(p)=f^{3\cdot2^k}(p)=p$, then $F^m(p)=p$. Since $D^k_0\subset J_{k,0}$, we have \[F^i(p)\in J_{k,3i}\pmod{2^k}\] for every $0\le i\le m$. Therefore, by the choice of $k$, $d(x_i,F^i(p))\le\epsilon$ for all $0\le i\le m$. In the latter case, note that \[f(D^j_l)=D^j_{l+1}\pmod{3\cdot2^j}\] for every $0\le l\le 3\cdot2^j-1$. By the choice of $\delta$, we have \[x_i\in D^j_{3i}\pmod{3\cdot2^j}\] for every $0\le i\le m$, and so $2^j|m$ because $x_m=x_0\in D^j_0$. Put $p=x_0\in D^j_0$ and note that $F^{2^j}(p)=f^{3\cdot2^j}(p)=p$, then $F^m(p)=p$. Again by the choice of $\delta$, it also holds that $d(x_i,F^i(p))=d(x_i,F^i(x_0))\le\epsilon$ for all $0\le i\le m$, and this finishes the proof.

\section{Proof of Theorem 1.3}

In this section, we prove Theorem 1.3. For the proof, we need the fact that when $(X,d)$ is a Cantor space, there exists $g\in\mathcal{H}(X)$ such that its conjugacy class \[\{\phi\circ g\circ\phi^{-1}:\phi\in\mathcal{H}(X)\}\] is residual in $(\mathcal{H}(X),D)$, i.e., containing dense $G_\delta$ set, and topological entropy of such $g$ (denoted by $h_{top}(g)$) is zero (see \cite{AGW, GW, KR}). We also use the fact that when $(X,d)$ is a Cantor space, the following set \[\mathcal{TA}(X)=\{F\in\mathcal{H}(X):\text{$F$ is expansive and has the shadowing property}\}\] is dense in $(\mathcal{H}(X),D)$ (see \cite{Ki,S}). For any $F\in\mathcal{TA}(X)$, we have $CR(F)=\Omega(F)$, and $\Omega(F)$ admits the spectral decomposition, that is, \[\Omega(F)=\bigsqcup_{i=1}^k\Omega_i\] where $\Omega_i$ is clopen in $\Omega(F)$ and $F$-invariant, and $F|_{\Omega_i}$ is transitive for each $1\le i\le k$. Then, each $\Omega_i$, $1\le i\le k$, is known to admit a decomposition \[\Omega_i=\bigsqcup_{j=0}^{m_i-1}D_{i,j}\] where $D_{i,j}$ is clopen in $\Omega(F)$, $F(D_{i.j})=D_{i,j+1}\pmod{m_i}$, and $F^{m_i}|_{D_{i,j}}$ is topologically mixing for every $0\le j\le m_i-1$ (see \cite[Theorem 3.1.11]{AH}). 

\begin{proof}[Proof of Theorem 1.3]
If $f$ is s-topologically stable, then it is obvious from the definition that $f$ is topologically stable, so by Lemma 1.1 and Theorem 1.1, $f$ has the shadowing property. Fix $\epsilon>0$ and take $\delta>0$ so small as in the definition of the s-topological stability. Then, by the above fact, there is $g\in\mathcal{H}(X)$ such that $D(f,g)<\delta$ and $h_{top}(g)=0$. Given such $g$, since there is a surjective $h\in\mathcal{C}(X)$ with $h\circ g=f\circ h$, we have $h_{top}(f)=0$. Since $f$ has the shadowing property, and $h_{top}(f)=0$, by \cite[Corollary 6]{M}, $f|_{\Omega(f)}$ is equicontinuous and $\dim\Omega(f)=0$. By Lemma 2.2 (2), for each $x\in\Omega(f)$, $x\in Per(f)$ or $f|_{\overline{O_f(x)}}$ is conjugate to an odometer. Then, by the other fact above, there is $F\in\mathcal{TA}(X)$ such that $D(f,F)<\delta$, and so $H\circ F=f\circ H$ for some surjective $H\in\mathcal{C}(X)$. Let $\Omega(F)=\bigsqcup_{i=1}^k\Omega_i$ be the spectral decomposition as above and put $B_i=H(\Omega_i)$ for each $1\le i\le k$. For any fixed $1\le i\le k$, $B_i$ is $f$-invariant, and $f|_{B_i}$ is transitive, so there is $x_i\in B_i$ such that $B_i=\omega(x_i,f)$. By this, $B_i$ is a periodic orbit of $f$, or $f|_{B_i}$ is conjugate to an odometer. Let $\Omega_i=\bigsqcup_{j=0}^{m_i-1}D_{i,j}$ be the decomposition as above and put $C_i=H(D_{i,0})$ ($\subset B_i$). Then, $C_i$ is $f^{m_i}$-invariant, and $f^{m_i}|_{C_i}$ is topologically mixing. It is clear that this is not possible when $f|_{B_i}$ is conjugate to an odometer, therefore $B_i$ is a periodic orbit of $f$. It remains to prove that $\Omega(f)=\bigcup_{i=1}^kB_i$. Assume that $x\in\Omega(f)\setminus\bigcup_{i=1}^kB_i$. Then, since $H$ is surjective, there is $y\in X$ such that $H(y)=x$. Then, there is $1\le i\le k$ such that \[\lim_{n\to\infty}d(F^n(y),\Omega_i)=0,\] but this implies \[\lim_{n\to\infty}d(f^n(x),B_i)=0,\] which cannot be true because $x\in Per(f)$ or $f|_{\overline{O_f(x)}}$ is conjugate to an odometer, so  $\omega(x,f)$ is disjoint from $B_i$. This proves $\Omega(f)=\bigcup_{i=1}^kB_i$ and finishes the proof.
\end{proof}

\end{document}